\theoremstyle{definition}
\newtheorem{definition}{Definition}[]
\newtheorem{remark}[definition]{Remark}
\theoremstyle{plain}
\newtheorem{theorem}[definition]{Theorem}
\newtheorem{corollary}[definition]{Corollary}
\newtheorem{proposition}[definition]{Proposition}
\newtheorem{lemma}[definition]{Lemma}
\newcommand{\norm}[1]{\left\lVert#1\right\rVert}
\begin{document}

\title{$L^p(I,C^\alpha(\Omega))$ Regularity for Reaction-Diffusion Equations with Non-smooth Data}

\date{\today}
\author{
    Patrick Dondl \\
    Department of Applied Mathematics\\
    University of Freiburg \\
    Hermann-Herder-Stra\ss e 10, 79104 Freiburg i. Br., Germany \\
    \texttt{patrick.dondl@mathematik.uni-freiburg.de}
   \And
    Marius Zeinhofer \\
    Department of Applied Mathematics\\
    University of Freiburg\\
    Hermann-Herder-Stra\ss e 10, 79104 Freiburg i. Br., Germany \\
    \texttt{marius.zeinhofer@mathematik.uni-freiburg.de}
}

\maketitle

\begin{abstract}
    We prove an $L^p(I,C^\alpha(\Omega))$ regularity result for a reaction-diffusion equation with mixed boundary conditions, symmetric $L^\infty$ coefficients and an $L^\infty$ initial condition. We provide explicit control of the $L^p(I,C^\alpha(\Omega))$ norm with respect to the data. To prove our result, we first establish  $C^\alpha(\Omega)$ control of the stationary equation, extending a result by \cite{haller2009holder}.
\end{abstract}

\paragraph{Keywords:} Elliptic problems, Parabolic problems, Maximal regularity, Mixed boundary value problems

\section{Introduction}
In this article we are interested in the $L^p(I,C^\alpha(\Omega))$ regularity of the solution $v$ to a reaction-diffusion equation of the form
\begin{align*}
    d_tv - \operatorname{div}(D\nabla v) + v &= f, \quad \ \ \text{in }I\times\Omega
    \\
    v(0) &= v_0, \quad  \text{in }\Omega
    \\
    v(t,x) &= 0, \quad \ \ t\in I, x\in\Gamma_D,
    \\
    n\cdot D(x)\nabla v(t,x) &= 0, \quad \ \  t\in I, x\in\Gamma_N.
\end{align*}
where $\Omega\subset \mathbb{R}^d$, $d=2,3$ is a bounded Lipschitz domain, $\partial\Omega = \Gamma_D\cup \Gamma_N$ is a partition of the boundary in Dirichlet and Neumann part, $I=[0,T]$ is a finite time interval, and $f\in L^p(I,L^2(\Omega))$. In particular, we consider the case where the initial condition $v_0$ only has regularity $v_0\in L^\infty(\Omega)$ and we make few assumptions on the remaining data. Our main result is the following.
\begin{theorem}\label{theorem:parabolic_quantitative_holder}
    Let $\Omega \subset \mathbb{R}^d$ with $d=2,3$ be a Lipschitz domain, $I=[0,T]$ a time interval, $\partial\Omega = \Gamma_N\cup\Gamma_D$ a partition of the boundary into a Dirichlet and a Neumann part, where both $\Gamma_N$ and $\Gamma_D$ are allowed to have vanishing measure. Assume that $\Omega\cup\Gamma_N$ is Gr\"oger regular, let $f\in L^p(I,L^2(\Omega))$ for $p\in[2,\infty)$, $D\in L^\infty(\Omega,\mathbb{R}^d)$ be symmetric and elliptic with ellipticity constant $\nu > 0$. For $v_0\in L^\infty(\Omega)$ denote by $v\in H^1(I,H^1_D(\Omega),H^1_D(\Omega)^*)$ the solution to 
    \begin{align*}
        \int_I \langle d_tv,\cdot\rangle_{H^1_D(\Omega)}\mathrm dt + \int_I\int_\Omega D\nabla v \nabla \cdot + v(\cdot)\mathrm dx\mathrm dt
        &=
        \int_I\int_\Omega f(\cdot)\mathrm dx\mathrm dt \quad \text{in }L^2(I,H^1_D(\Omega))^*
        \\
        v(0) &= v_0.
    \end{align*}
    Then there is $\beta = \beta(p)\in (0,1)$ such that $v\in L^p(I,C^\beta(\Omega))$ and we may estimate
    \begin{equation}\label{equation:parabolic_quantitative_holder}
        \norm{v}_{L^p(I,C^\beta(\Omega))} \leq C\left(\Omega, T, \nu, \lVert D \rVert_{L^\infty(\Omega,\mathbb{R}^{d\times d})}, p, \beta \right)\cdot \left[ \lVert f \rVert_{L^p(I,L^2(\Omega))} + \lVert v_0 \rVert_{L^\infty(\Omega)} \right].
     \end{equation}
     In the above estimate, if we fix $\Omega$ and $p$, only a lower bound for $\nu$ and upper bounds for $\norm{D}$ and $T$ determine the value of the constant $C$. The regularity estimate is thus  uniform for $\nu\in[c_E,C_E]$, $D\in L^\infty(\Omega, \mathcal{M}_s)$ with $\norm{D}\leq C_B$ and time intervals $I^*=[0,T^*]$ with $T^* \leq T$. 
\end{theorem}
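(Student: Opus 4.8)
\section*{Proof proposal for Theorem \ref{theorem:parabolic_quantitative_holder}}

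The plan is to transfer the stationary $C^\alpha$-regularity to the parabolic problem through the analytic semigroup. Introduce $A := -\operatorname{div}(D\nabla\,\cdot\,)+1$ as the self-adjoint operator on $L^2(\Omega)$ associated with the symmetric coercive form $(u,w)\mapsto\int_\Omega D\nabla u\cdot\nabla w+uw\,\mathrm dx$ with form domain $H^1_D(\Omega)$; since this is a Dirichlet form with $\mathcal E(u,u)\ge\norm{u}_{L^2(\Omega)}^2$, we have $\sigma(A)\subset[1,\infty)$ and $A$ generates a bounded analytic, $L^\infty$-contractive $C_0$-semigroup $(e^{-tA})_{t\ge0}$. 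By Lions' theory (applicable since $p\ge2$ and $I$ is finite, so $f\in L^2(I,L^2(\Omega))$, and since $v_0\in L^\infty(\Omega)\subset L^2(\Omega)$) the weak solution $v$ of the statement is the mild solution
\begin{equation*}
    v(t)=e^{-tA}v_0+\int_0^t e^{-(t-s)A}f(s)\,\mathrm ds=:v_{\mathrm{hom}}(t)+v_{\mathrm{inh}}(t).
\end{equation*}
I will use the stationary estimate established separately (our extension of \cite{haller2009holder}): there are $\alpha=\alpha(\Omega)\in(0,1)$ and $C_{\mathrm{ell}}=C_{\mathrm{ell}}(\Omega,\nu,\norm{D}_{L^\infty})$ such that $\norm{A^{-1}g}_{C^\alpha(\Omega)}\le C_{\mathrm{ell}}\norm{g}_{L^2(\Omega)}$, i.e.\ $D(A)\hookrightarrow C^\alpha(\Omega)$ with norm $C_{\mathrm{ell}}$; the restriction $d\in\{2,3\}$ enters precisely here, since it guarantees $L^2(\Omega)\hookrightarrow W^{-1,q}_D(\Omega)$ for some $q>d$, the natural data class of the elliptic result. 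Fix once and for all $\beta=\beta(p)\in(0,\alpha/p)$, e.g.\ $\beta=\alpha/(2p)$; note $\beta<\alpha$ because $p\ge2$.

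For $v_{\mathrm{inh}}$ I would invoke maximal $L^p$-regularity: a positive self-adjoint operator on a Hilbert space has maximal $L^p$-regularity on $(0,T)$ with a constant $C_{\mathrm{mr}}$ depending only on $p$ (and, at worst, monotonically on $T$), independent of the particular operator — via its bounded $H^\infty$-calculus of small angle; for $p=2$ this is de Simon's theorem. Hence $v_{\mathrm{inh}}\in L^p(I,D(A))$ with $\norm{v_{\mathrm{inh}}}_{L^p(I,D(A))}\le C_{\mathrm{mr}}\norm{f}_{L^p(I,L^2(\Omega))}$, and composing with $D(A)\hookrightarrow C^\alpha(\Omega)\hookrightarrow C^\beta(\Omega)$ gives $\norm{v_{\mathrm{inh}}}_{L^p(I,C^\beta(\Omega))}\le C_{\mathrm{ell}}C_{\mathrm{mr}}\norm{f}_{L^p(I,L^2(\Omega))}$.

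The homogeneous part is the crux, because the datum is only $L^\infty$. The key is the smoothing bound $\norm{e^{-tA}}_{\mathcal L(L^\infty(\Omega),C^\beta(\Omega))}\le C\,t^{-\beta/\alpha}$ for $t\in(0,T]$, whose singularity at $t=0$ lies in $L^p(0,T)$ exactly because $\beta<\alpha/p$. I would obtain it by combining three facts: (i) $L^\infty$-contractivity of the Dirichlet-form semigroup, so $\norm{e^{-tA}}_{\mathcal L(L^\infty(\Omega),C(\Omega))}\le1$; (ii) the factorization $e^{-tA}=A^{-1}\bigl(Ae^{-(t/2)A}\bigr)e^{-(t/2)A}$ together with the spectral bounds $\norm{Ae^{-(t/2)A}}_{\mathcal L(L^2(\Omega))}\le 2/t$ and $\norm{e^{-(t/2)A}}_{\mathcal L(L^\infty(\Omega),L^2(\Omega))}\le|\Omega|^{1/2}$ and the elliptic estimate, which give $\norm{e^{-tA}}_{\mathcal L(L^\infty(\Omega),C^\alpha(\Omega))}\le 2C_{\mathrm{ell}}|\Omega|^{1/2}t^{-1}$; and (iii) the elementary interpolation inequality $\norm{w}_{C^\beta(\Omega)}\le 3\,\norm{w}_{C(\Omega)}^{1-\beta/\alpha}\norm{w}_{C^\alpha(\Omega)}^{\beta/\alpha}$. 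Applying (iii) to $w=e^{-tA}v_0$ and inserting (i)--(ii) yields $\norm{e^{-tA}v_0}_{C^\beta(\Omega)}\le C\,t^{-\beta/\alpha}\norm{v_0}_{L^\infty(\Omega)}$, hence
\begin{equation*}
    \norm{v_{\mathrm{hom}}}_{L^p(I,C^\beta(\Omega))}^p\le C^p\norm{v_0}_{L^\infty(\Omega)}^p\int_0^T t^{-\beta p/\alpha}\,\mathrm dt=\frac{C^p\,T^{\,1-\beta p/\alpha}}{1-\beta p/\alpha}\,\norm{v_0}_{L^\infty(\Omega)}^p<\infty .
\end{equation*}
Adding the two contributions gives \eqref{equation:parabolic_quantitative_holder}.

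For the uniformity statement I would trace the origin of each constant: the spectral bounds, the $L^\infty$-contractivity and $C_{\mathrm{mr}}$ are universal, using only $A=A^*\ge1$, which holds for every admissible $D$; the coefficients $D,\nu$ enter solely through $C_{\mathrm{ell}}$ and $\alpha$, and by the quantitative form of the elliptic result $\alpha$ may be taken to depend on $\Omega$ alone while $C_{\mathrm{ell}}$ depends monotonically on a lower bound for $\nu$ and an upper bound for $\norm{D}_{L^\infty}$; and the only $T$-dependence is the increasing factor $T^{1-\beta p/\alpha}$ (together with the monotone $T$-dependence of $C_{\mathrm{mr}}$). This yields the asserted uniformity over $\nu\in[c_E,C_E]$, $\norm{D}\le C_B$ and $T^*\le T$. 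The step I expect to be most delicate is (ii)--(iii) for $v_{\mathrm{hom}}$: converting the $L^2$-based elliptic estimate into an $L^\infty\to C^\beta$ smoothing bound whose time singularity near $0$ is mild enough to be $L^p$-integrable, which works only because $\beta$ is allowed to shrink with $p$ like $\alpha/p$ — a $t$-independent or too-strong power would break the argument. A secondary point is to confirm that $C_{\mathrm{ell}}$ and $\alpha$ genuinely carry the stated structural dependence on $(\Omega,\nu,\norm{D}_{L^\infty})$, which I would read off from the proof of the elliptic result.
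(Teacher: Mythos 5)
Your proposal follows essentially the same route as the paper: the same splitting into a homogeneous part (initial datum only) and an inhomogeneous part (right-hand side only), the same reliance on the quantitative stationary estimate via the embedding $\operatorname{dom}(M)\hookrightarrow C^\alpha(\Omega)$, maximal $L^p$-regularity for the inhomogeneous part, the $O(1/t)$ analytic-semigroup smoothing bound plus $L^\infty$-contractivity for the homogeneous part, and a $C^0$--$C^\alpha$ interpolation with exponent taken just below $\alpha/p$ to render the time singularity $L^p$-integrable. The only cosmetic differences are in the precise citations (de Simon/$H^\infty$-calculus versus Amann's Theorem 4.10.8, analytic-semigroup smoothing versus Brezis's Theorem 7.7, which encode the same facts for a positive self-adjoint operator) and the exact form of the H\"older interpolation inequality.
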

The crucial detail in the above theorem is the fact that $v_0$ lies only in the space $L^\infty(\Omega)$ and not in the trace space for the initial conditions. Therefore, well known maximal regularity results, for example \cite{amann1995linear}, cannot be applied directly. We thus split the problem into two equations, one with homogeneous right-hand side and one with homogeneous initial condition and analyze them separately. We remark that we are only concerned with  spatial dimensions two and three and that our proof does not extend beyond this. The reason lies in the stationary counterpart of the result, Theorem 5.1 in \cite{haller2009holder}, where this restriction on the dimension appears. 

There are a number of reasons to study regularity properties of equations with non-smooth data. Often, mixed boundary conditions are dictated by concrete applications and this alone leads to a considerable loss of regularity, at least if regularity up to the boundary is needed, see \cite{savare1997regularity, kassmann2004regularity}. Another reason to study problems with non-smooth data comes from multi-physics problems, i.e., coupled systems of equations. To prove existence results for coupled systems one usually employs an ansatz based on a fixed-point theorem and successively solves the equations. This leads to problems with low regularity data as it may be necessary to frame the fixed point problem in a low regularity setting. Providing explicit norm control in the sense of Theorem~\ref{theorem:parabolic_quantitative_holder} is useful, e.g., for (PDE constrained) optimization problems where the existence of a solution is established by the direct method of the calculus of variations and thus bounds on the solution independent of the data are required. In forthcoming work we discuss how our main result is crucial in establishing the existence of an optimal control function in a PDE constrained optimization problem stemming from tissue engineering.

The proof of Theorem~\ref{equation:parabolic_quantitative_holder} crucially relies on a counterpart for the stationary problem. The stationary result is available in the literature, see Theorem 5.1 in \cite{haller2009holder}, albeit without the information on the norm control. We therefore revisit the proof of Theorem 5.1 in \cite{haller2009holder} and provide the missing estimates required for explicit norm control. For the reasons given above, this is of independent interest, so we provide Theorem~\ref{thm:quantitative_holder_regularity} for a precise statement of our quantitative version of Theorem 5.1 in \cite{haller2009holder}. 

There are several results in the literature that treat the regularity of elliptic and parabolic equations subject to Dirichlet-Neumann conditions for non-smooth domains and rough coefficients. We briefly discuss the ones most closely related to our contribution. The mere H\"older regularity of elliptic equations for mixed boundary conditions was already established in \cite{haller2009holder}, however without explicit control of the H\"older norm and the implications for parabolic problems where recognized in \cite{disser2017holder}. There, a $C^\alpha(I\times\Omega)$ regularity result for a diffusion equation (without reaction term) was provided, however, the authors neither consider $L^\infty(\Omega)$ initial conditions (under which the $C^\alpha(I\times\Omega)$ regularity can not hold in general) nor do they provide explicit control of the H\"older norm in terms of the data. Other works focus on the maximal regularity of parabolic equations in distribution spaces, see for instance \cite{haller2011maximal} or maximal regularity questions for non-autonomous equations, see \cite{disser2017holder}.

\subsection{Preliminaries and Notation}
Let $\Omega\subset\mathbb{R}^d$, for $p\in[1,\infty]$ and $k\in\mathbb{N}$ we denote by $L^p(\Omega)$ the space of $p$-integrable functions, by $W^{k,p}(\Omega)$ the subset of $L^p(\Omega)$ of $k$-times weakly differentiable functions. When $\Gamma_D\subset\partial\Omega$, we denote by $W^{1,p}_D(\Omega)$ the subset of $W^{1,p}(\Omega)$ consisting of functions that vanish on $\Gamma_D$ in the trace sense. This space coincides with the closure of $C^\infty_c(\Omega)$ in $W^{1,p}(\Omega)$ when $\Omega$ is a Lipschitz domain, see the definition below. If $p=2$ we write $H^k(\Omega)$ and $H^1_D(\Omega)$ instead of $W^{k,2}(\Omega)$ and $W^{1,2}_D(\Omega)$. By $C^\alpha(\Omega)$ we denote the space of $\alpha$ H\"older continuous functions for $\alpha\in(0,1)$. The topological dual space of a Banach space $X$ is denoted by $X^*$.

We call a bounded, open set $\Omega\subset\mathbb{R}^d$ a Lipschitz domain if $\overline{\Omega}$ is a Lipschitz manifold with boundary, this definition is adopted from \cite[Definition 1.2.1.2]{grisvard2011elliptic}.
We denote the cube $[-1,1]^n\subset\mathbb{R}^d$ by $Q$, its half $\{x\in Q\mid x_d <0\}$ by $Q_-$, the hyperplane $\{ x\in Q \mid x_d = 0 \}$ by $\Sigma$ and  $\{x\in\Sigma\mid x_{d-1}<0\}$ by $\Sigma_0$. The next definition goes back to Gr\"oger, see \cite{groger1989aw}.
\begin{definition}[Gr\"oger Regular Sets]\label{definition:groger_regular_sets}
    Let $\Omega\subset\mathbb{R}^d$ be bounded and open and $\Gamma\subset\partial\Omega$ a relatively open set. We call $\Omega\cup\Gamma$ Gr\"oger regular, if for every $x\in\partial\Omega$ there are open sets $U,V\subset\mathbb{R}^d$ with $x\in U$, and a bijective, bi-Lipschitz map $\phi:U\to V$, such that $\phi(x) = 0$ and $\phi(U\cap (\Omega\cup\Gamma) )$ is either $Q_-$, $Q_-\cup\Sigma$ or $Q_-\cup\Sigma_0$.
\end{definition}
It can easily be shown that Gr\"oger regular sets $\Omega$ (no matter the choice $\Gamma \subset \partial\Omega$) are Lipschitz domains, we refer to \cite[Theorem 5.1]{haller2009holder}. The notion of Gr\"oger regularity is very weak and many applications fall in this category. This claim is supported by the following characterization of Gr\"oger regular sets in two and three dimensions that allow to check Gr\"oger regularity almost ``by appearance''.
\begin{theorem}[Gr\"oger Regular Sets in 2D, Theorem 5.2 in \cite{haller2009holder}]\label{theorem:groger_regular_sets_in_2d}
    Let $\Omega \subset \mathbb{R}^2$ be a Lipschitz domain and $\Gamma\subset\partial\Omega$ be relatively open. Then $\Omega\cup\Gamma$ is Gr\"oger regular if and only if $\overline{\Gamma}\cap(\partial\Omega\setminus\Gamma)$ is finite and no connected component of $\partial\Omega\setminus\Gamma$ consists of a single point.
\end{theorem}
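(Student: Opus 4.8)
The plan is to use that the boundary of a bounded planar Lipschitz domain is a compact $1$-manifold, hence a finite disjoint union of Jordan curves, each of which is locally the graph of a Lipschitz function. In particular, for every $x\in\partial\Omega$ the Lipschitz structure already provides a bi-Lipschitz chart $\psi\colon U\to V$ with $\psi(x)=0$, $\psi(U\cap\Omega)=Q_-$ and $\psi(U\cap\partial\Omega)=\Sigma$ after shrinking $U$ (here $d=2$, so $\Sigma$ is a subinterval of a line). The theorem then reduces to deciding, in these coordinates, how the relatively open set $\Gamma$ is positioned inside $\partial\Omega$, and to matching the local picture with the three admissible model sets $Q_-$, $Q_-\cup\Sigma$, $Q_-\cup\Sigma_0$. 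Throughout, note that $\overline{\Gamma}\cap(\partial\Omega\setminus\Gamma)=\overline{\Gamma}\setminus\Gamma$ is the topological boundary of $\Gamma$ inside $\partial\Omega$.

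For the implication ``Gr\"oger regular $\Rightarrow$ the two conditions'' I would argue by contradiction using only the behaviour near one point. Fix $x\in\partial\Omega$ and a Gr\"oger chart $\phi$ at $x$. Since $\Omega$ is open and $\Gamma$ has empty interior in $\mathbb{R}^d$, passing to interiors and boundaries shows that, after shrinking, $\phi(U\cap\Omega)=Q_-$ and $\phi(U\cap\partial\Omega)=\Sigma$ in all three cases, whence $\phi(U\cap\Gamma)$ is $\emptyset$, $\Sigma$, or $\Sigma_0$, and correspondingly $\phi(U\cap(\partial\Omega\setminus\Gamma))$ is $\Sigma$, $\emptyset$, or $\Sigma\setminus\Sigma_0$. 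If $\overline{\Gamma}\setminus\Gamma$ were infinite, then, being closed in the compact set $\partial\Omega$, it would have an accumulation point $x$; since $x\in\overline{\Gamma}\setminus\Gamma$ the first two models are excluded, while in the third $\phi$ maps $(\overline{\Gamma}\setminus\Gamma)\cap U$ onto $\overline{\Sigma_0}\cap(\Sigma\setminus\Sigma_0)=\{0\}$, contradicting that $x$ is an accumulation point. Similarly, if a connected component of $\partial\Omega\setminus\Gamma$ were a singleton $\{x\}$, then the first model forces $U\cap\partial\Omega\subset\partial\Omega\setminus\Gamma$ and the third forces $U\cap(\partial\Omega\setminus\Gamma)=\phi^{-1}(\Sigma\setminus\Sigma_0)$, in both cases a nondegenerate subarc of $\partial\Omega$ through $x$, so the component of $x$ strictly contains $\{x\}$, while the second model forces $x\in\Gamma$; contradiction in every case.

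For the converse, assume the two conditions. On each boundary curve the finite set $\overline{\Gamma}\setminus\Gamma$ of endpoints of $\Gamma$ divides the curve into finitely many open arcs; each such arc $A$, being connected and disjoint from $\overline{\Gamma}\setminus\Gamma$, is the disjoint union of the relatively open pieces $A\cap\Gamma$ and $A\setminus\overline{\Gamma}$, hence coincides with one of them. Thus $\Gamma$ is a finite union of open arcs and $\partial\Omega\setminus\Gamma$ is a finite union of closed arcs together with at most finitely many isolated points, and the second hypothesis says precisely that no such isolated points occur. Now fix $x\in\partial\Omega$. If $x\in\Gamma$, shrink the Lipschitz chart $\psi$ so that $U\cap\partial\Omega\subset\Gamma$; then $\psi(U\cap(\Omega\cup\Gamma))=Q_-\cup\Sigma$. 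If $x\notin\overline{\Gamma}$, shrink so that $U\cap\partial\Omega$ misses $\Gamma$; then $\psi(U\cap(\Omega\cup\Gamma))=Q_-$. If $x\in\overline{\Gamma}\setminus\Gamma$, then by the arc decomposition together with the second hypothesis exactly one of the two arcs of $\partial\Omega$ abutting $x$ lies in $\Gamma$ while the other is a nondegenerate arc disjoint from $\overline{\Gamma}$; in the chart $\psi$ the image of the $\Gamma$-side is an open subinterval of $\Sigma\cong(-1,1)$ of which $\psi(x)=0$ is an endpoint, and a bi-Lipschitz self-map of $Q$ that fixes $Q_-$ and $\Sigma$ setwise and rescales the first coordinate piecewise linearly carries this subinterval onto $\Sigma_0$. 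Composing it with $\psi$ produces a Gr\"oger chart $\phi$ with $\phi(U\cap(\Omega\cup\Gamma))=Q_-\cup\Sigma_0$, so $\Omega\cup\Gamma$ is Gr\"oger regular.

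The step I expect to be the main obstacle is the endpoint case of the converse: one must make precise that the two portions of $\partial\Omega$ on either side of $x$ become, in the Lipschitz chart, the two half-intervals of $\Sigma$ meeting at $0$, and then construct the bi-Lipschitz correction that normalizes the $\Gamma$-side to exactly $\Sigma_0$ while leaving $Q_-$ unchanged. This is elementary precisely because in dimension two $\Sigma$ is one-dimensional, and it is here that the restriction $d=2$ is genuinely used; the rest is point-set topology on curves together with bookkeeping against the three model sets.
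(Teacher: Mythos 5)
The paper does not prove this theorem; it is stated as a citation of Theorem 5.2 in Haller--Dintelmann and Rehberg, so there is no in-paper argument to compare yours against. Evaluated on its own terms, your proof has the right structure and the forward implication is correct: the reduction that in any Gr\"oger chart $\phi(U\cap\Omega)$ is the open half of the model and $\phi(U\cap\partial\Omega)=\Sigma$ (via $\overline{A}\cap U=\overline{A\cap U}\cap U$ for open $U$ and a homeomorphism) is sound, the accumulation-point argument against infiniteness of $\overline{\Gamma}\setminus\Gamma$ is correct because in the third model $\phi$ sends $(\overline{\Gamma}\setminus\Gamma)\cap U$ to the single point $0$, and the case analysis excluding singleton components is complete. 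The arc decomposition in the converse, using connectedness of each complementary arc of the finite set $\overline{\Gamma}\setminus\Gamma$ to force it entirely into $\Gamma$ or entirely out of $\overline{\Gamma}$, is the right idea and uses the hypothesis precisely to rule out isolated points of $\partial\Omega\setminus\Gamma$ sandwiched between two $\Gamma$-arcs.

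The one place that needs repair is the normalization in the endpoint case of the converse. You claim a bi-Lipschitz \emph{self-map} of $Q$ that fixes $Q_-$ and $\Sigma$ setwise and rescales $x_1$ piecewise linearly so as to carry the image of the $\Gamma$-side, say $(a,0)\times\{0\}$ with $a>-1$, onto $\Sigma_0=(-1,0)\times\{0\}$. No such self-map of $\Sigma$ exists: an increasing homeomorphism of $(-1,1)$ fixing $0$ cannot send an interior point $a$ to the boundary value $-1$, so the map would fail injectivity near the left face of $Q$. The fix is standard and does not change the architecture: first shrink $U$ (replace $U$ by $\psi^{-1}((-r,r)^d)$ and renormalize $\psi$ by $1/r$, which stays a Lipschitz chart with the same normalizations) so that $U\cap\partial\Omega$ lies in the union of the two arcs abutting $x$ together with $\{x\}$. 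Then $\psi(U\cap\Gamma)$ is exactly one full half of $\Sigma$, i.e.\ $\Sigma_0$ or $\Sigma\setminus\overline{\Sigma_0}$, and the only correction you may still need is the reflection $(x_1,x_2)\mapsto(-x_1,x_2)$, which genuinely is a bi-Lipschitz self-map of $Q$ preserving $Q_-$ and $\Sigma$. With this substitution the third case closes and the proof is complete.
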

\begin{theorem}[Gr\"oger Regular Sets in 3D, Theorem 5.4 in \cite{haller2009holder}]\label{theorem:groger_regular_sets_in_3d}
    Let $\Omega \subset \mathbb{R}^3$ be a Lipschitz domain and $\Gamma\subset\partial\Omega$ be relatively open. Then $\Omega\cup\Gamma$ is Gr\"oger regular if and only if the following two conditions hold
    \begin{itemize}
        \item [(i)] $\partial\Omega\setminus\Gamma$ is the closure of its interior.
        \item [(ii)] For any $x\in \overline{\Gamma}\cap(\partial\Omega\setminus\Gamma)$ there is an open neighborhood $U_x$ of $x$ and a bi-Lipschitz map $\phi:U_x\cap\overline{\Gamma}\cap(\partial\Omega\setminus\Gamma)\to (-1,1)$.
    \end{itemize}
\end{theorem}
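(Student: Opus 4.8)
The plan is to prove the two implications of the equivalence separately. Throughout set $F:=\partial\Omega\setminus\Gamma$ and $E:=\overline{\Gamma}\cap F$, all closures and interiors of subsets of $\partial\Omega$ being taken relatively to $\partial\Omega$. The first thing I would record is a purely topological observation: since $\Gamma$ is relatively open, $F$ relatively closed, and the two partition $\partial\Omega$, the set $E$ is simultaneously the relative boundary of $\Gamma$ and of $F$, so that in a neighbourhood of any point $\partial\Omega$ splits into the disjoint union $\Gamma\sqcup E\sqcup\operatorname{int}F$. In particular condition (i) is equivalent to the assertion that $x\in\overline{\operatorname{int}F}$ for every $x\in F$, and condition (ii) is tied to the local structure of $E$.

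For the direction ``Gr\"oger regular $\Rightarrow$ (i) and (ii)'' I would fix $x\in\partial\Omega$ together with a chart $\phi\colon U\to V$ as in Definition~\ref{definition:groger_regular_sets}, $\phi(x)=0$. A soft argument (using that $\phi$ is a homeomorphism mapping the lower-dimensional set $U\cap\partial\Omega$ onto a set with empty interior, and that exterior points of $\Omega$ accumulate at every boundary point) shows that, after shrinking $U$, one has $\phi(U\cap\Omega)=Q_-$, $\phi(U\cap\partial\Omega)=\Sigma$, and $\phi(U\cap\Gamma)$ equals $\emptyset$, $\Sigma$ or $\Sigma_0$ in the three respective model configurations. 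From this (i) and (ii) follow by inspection: in the model $Q_-$ one has $U\cap\partial\Omega\subset F$, hence $x\in\operatorname{int}F$; in the model $Q_-\cup\Sigma$ one has $x\in\Gamma$, hence $x\notin F$; in the model $Q_-\cup\Sigma_0$ one has $\phi(U\cap F)=\{y\in\Sigma:y_{d-1}\ge 0\}$, whose relative interior near $0$ is $\{y_{d-1}>0\}$, hence $x\in\overline{\operatorname{int}F}$; so (i) holds in every case. For (ii), a point $x\in E$ lies in $F$, which excludes the second model, and in $\overline{\Gamma}$, which excludes the first, so the third model applies, where $\phi(U\cap\overline{\Gamma})$ near $0$ equals $\{y\in\Sigma:y_{d-1}\le 0\}$ and hence $\phi(U\cap E)=\{y\in\Sigma:y_{d-1}=0\}\cap V$; for $d=3$ this is an open subsegment of the line $[-1,1]\times\{0\}\times\{0\}$, so composing $\phi$ with the projection onto the first coordinate and rescaling yields the bi-Lipschitz map onto $(-1,1)$ required by (ii).

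For the converse I would fix $x\in\partial\Omega$ and begin from a bi-Lipschitz boundary chart $\psi\colon U\to V$ of the Lipschitz domain $\Omega$ with $\psi(x)=0$, $\psi(U\cap\Omega)=Q_-$ and $\psi(U\cap\partial\Omega)=\Sigma$, where I identify $\Sigma$ with an open subset of the plane $\{x_3=0\}\subset\mathbb{R}^3$. If $x\in\Gamma$, relative openness of $\Gamma$ lets me shrink $U$ so that $U\cap\partial\Omega\subset\Gamma$, producing the model $Q_-\cup\Sigma$; if $x\in\operatorname{int}F$, shrinking so that $U\cap\Gamma=\emptyset$ produces the model $Q_-$. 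The only genuine case is $x\in F\setminus\operatorname{int}F$: by (i) together with the opening observation this forces $x\in E$, so (ii) supplies a neighbourhood on which $E$ is a bi-Lipschitz arc through $x$. Transporting by $\psi$, the image $C:=\psi(E)$ is then a bi-Lipschitz arc through $0$ lying in the plane $\{x_3=0\}$, and near $0$ one has $\Sigma\setminus C=\psi(\Gamma)\sqcup\psi(\operatorname{int}F)$, the two pieces being precisely the two sides of $C$.

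The core of the proof, and the step I expect to be the main obstacle, is to straighten $C$ by a bi-Lipschitz change of coordinates. Here I would invoke a planar bi-Lipschitz straightening theorem for arcs, in the spirit of Tukia's extension results for bi-Lipschitz and quasisymmetric embeddings of the real line into the plane: a bi-Lipschitz embedding of an interval (after an elementary extension, of all of $\mathbb{R}$) into $\mathbb{R}^2$ extends to a bi-Lipschitz self-homeomorphism $\eta$ of $\mathbb{R}^2$. Since $\eta$ is a homeomorphism of the plane it carries $C$ onto a line through $0$ and the two sides of $C$ onto the two half-planes, and after composing with the reflection $x_2\mapsto -x_2$ if necessary I may assume $\eta(\psi(\Gamma))$ lies locally in $\{x_2<0\}$. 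Extending $\eta$ trivially in the $x_3$-direction by $(x_1,x_2,x_3)\mapsto(\eta(x_1,x_2),x_3)$ preserves $\{x_3<0\}$, $\{x_3=0\}$ and $\{x_3>0\}$, and composing with $\psi$ and one further elementary bi-Lipschitz normalisation that maps the resulting region back onto the standard half-cube $Q_-$ (its flat part being split by a straight segment into the two prescribed halves) yields a chart $\phi\colon U'\to V'$ with $\phi(x)=0$ and $\phi\big(U'\cap(\Omega\cup\Gamma)\big)=Q_-\cup\Sigma_0$, exhibiting $\Omega\cup\Gamma$ as Gr\"oger regular. The reason this is the delicate point --- and the reason the whole characterisation is confined to $d\le 3$ --- is that the straightening is a genuine result of geometric function theory rather than a soft topological fact: its higher-dimensional analogue (bi-Lipschitz flattening of a topologically flat hypersurface) already fails in dimension four. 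A secondary point needing care is verifying that $C$ really separates a neighbourhood of $0$ into exactly the two components $\psi(\Gamma)$ and $\psi(\operatorname{int}F)$, which is where the identification of $E$ with the common relative boundary of $\Gamma$ and $\operatorname{int}F$ from the first paragraph is used.
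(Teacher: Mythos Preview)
The paper does not contain a proof of this statement: Theorem~\ref{theorem:groger_regular_sets_in_3d} is quoted verbatim from \cite{haller2009holder} (Theorem~5.4 there) and stated without proof, as part of the preliminary material used to motivate the generality of Gr\"oger regular sets. There is therefore nothing in the present paper to compare your argument against.

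That said, your proposal is essentially the argument given in the original reference. The forward direction is a clean case analysis on the three local models, and you handle it correctly. For the converse, you correctly isolate the only nontrivial situation (points $x\in F\setminus\operatorname{int}F$, which by~(i) must lie on the interface $E$), transport to the flat chart, and then invoke a planar bi-Lipschitz straightening result of Tukia type to turn the bi-Lipschitz arc $C$ into a straight segment. This is precisely the device used in \cite{haller2009holder}, and you rightly flag it as the heart of the matter and the reason the characterisation is specific to $d\le 3$. The only places where a referee would ask for more detail are the ones you already identify: the verification that $\psi$ carries $\partial\Omega$ onto $\Sigma$ (so that the interface really becomes a planar arc), the local separation of $\Sigma\setminus C$ into exactly the two components $\psi(\Gamma)$ and $\psi(\operatorname{int}F)$, and the final bookkeeping that the composite map still has the bi-Lipschitz property and lands on the standard model $Q_-\cup\Sigma_0$. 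None of these is a genuine obstacle.
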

In the definition of Gr\"oger regular sets, the local model $\{ x\in Q\mid x_d<0 \}\cup \{ x\in Q\mid x_d = 0, \ x_{d-1} <0 \}$ is redundant. We also cite the following.
\begin{lemma}[Lemma 4.10 in \cite{haller2009holder}]
    There exists a bi-Lipschitz mapping $\Psi:\mathbb{R}^d \to \mathbb{R}^d$ mapping $Q_-\cup\Sigma_0$ onto $Q_-\cup\Sigma$.
\end{lemma}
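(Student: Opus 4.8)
The plan is to reduce the assertion to a construction on the boundary of the box $K:=\overline{Q_-}=[-1,1]^{d-1}\times[-1,0]$ and then invoke standard extension results. Here $Q_-$ is the interior of $K$, the top face of $K$ is $T:=[-1,1]^{d-1}\times\{0\}$, $\Sigma$ is its relative interior, and $\Sigma_0=\{x\in\Sigma:x_{d-1}<0\}$ is one half of $\Sigma$. The two sets of interest are the disjoint unions $Q_-\cup\Sigma_0=\operatorname{int}K\cup\Sigma_0$ and $Q_-\cup\Sigma=\operatorname{int}K\cup\Sigma$, and a homeomorphism of $\mathbb{R}^d$ that maps $K$ onto $K$ automatically maps $\operatorname{int}K$ onto $\operatorname{int}K$ and $\partial K$ onto $\partial K$; hence it suffices to produce a bi-Lipschitz homeomorphism $\Psi$ of $\mathbb{R}^d$ with $\Psi(K)=K$ and $\Psi(\Sigma_0)=\Sigma$. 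I would construct it in three steps: (i) a bi-Lipschitz self-homeomorphism $g$ of $\partial K$ with $g(\Sigma_0)=\Sigma$; (ii) a coning extension of $g$ to a bi-Lipschitz self-homeomorphism $\widehat g$ of the convex body $K$; (iii) an extension of $\widehat g$ across $\partial K$, interpolated to the identity in a thin collar and equal to the identity outside a slightly dilated box $K'\supset K$.

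Step~(i) is the heart of the matter and amounts to \emph{unfolding the corner} along the seam $\{x_{d-1}=x_d=0\}$. Let $S:=\{x_{d-1}=1\}\cap\partial K$ be the side face opposite to $\Sigma_0$. Viewed in the $(x_{d-1},x_d)$ cross-section, $T\cup S$ is an $L$-shaped polygonal arc $P$ running from $(-1,0)$ through the corner $(1,0)$ to $(1,-1)$. I would let $\mu\colon P\to P$ be the piecewise-affine homeomorphism that fixes the two endpoints of $P$, maps the sub-arc $\{x_{d-1}\in[-1,0],\,x_d=0\}$ (the cross-section of $\overline{\Sigma_0}$) affinely onto the full top sub-arc $\{x_{d-1}\in[-1,1],\,x_d=0\}$, and maps the complementary sub-arc onto the $S$-part of $P$. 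Since $P$ is an embedded polygonal arc and $\mu$ has only the arclength slopes $2$ and $1/2$, it is bi-Lipschitz; letting $g$ act by $\mu$ in the pair $(x_{d-1},x_d)$ and by the identity in $x_1,\dots,x_{d-2}$ defines a bi-Lipschitz map on $T\cup S$ that fixes the edges shared with the remaining faces. I would then extend $g$ by the identity on the bottom face and on $\{x_{d-1}=-1\}\cap\partial K$, and on each remaining side face $\{x_i=\pm1\}\cap\partial K$ with $i\le d-2$---where $g$ is already prescribed on the boundary, by $\mu$ on part of it and by the identity on the rest, as a bi-Lipschitz self-homeomorphism of that face's boundary---extend it to the face by coning. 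For $d=2,3$, the only cases of interest here, this is completely explicit; in general one extends dimension by dimension over the skeleton of $\partial K$. The pieces agree on overlaps and are individually bi-Lipschitz, so $g$ is a bi-Lipschitz self-homeomorphism of $\partial K$, and since $\mu$ carries the cross-section of $\overline{\Sigma_0}$ onto that of $\overline T$ we obtain $g(\Sigma_0)=\Sigma$.

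Steps~(ii) and~(iii) should be routine. As $K$ is a bounded convex body, its radial function about an interior point is Lipschitz and bounded below, so radial coordinates on $K$ are bi-Lipschitz equivalent to polar coordinates; consequently the coning extension of $g$---sending the point at parameter $t\in[0,1]$ on the segment from the center to $\omega\in\partial K$ to the point at the same parameter on the segment to $g(\omega)$---is a bi-Lipschitz self-homeomorphism $\widehat g$ of $K$ that restricts to $g$ on $\partial K$ and fixes $\operatorname{int}K$. On the shell between $K$ and a small dilate $K'\supset K$ I would interpolate radially between $\widehat g|_{\partial K}$ and $\operatorname{id}|_{\partial K'}$, and set $\Psi=\operatorname{id}$ on $\mathbb{R}^d\setminus K'$; the three pieces agree on $\partial K$ and on $\partial K'$ and are bijective and bi-Lipschitz on each closed piece, so the standard gluing lemma for bi-Lipschitz maps yields a bi-Lipschitz homeomorphism $\Psi$ of $\mathbb{R}^d$ with $\Psi(K)=K$ and $\Psi(\Sigma_0)=\Sigma$. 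Then $\Psi(Q_-\cup\Sigma_0)=\operatorname{int}K\cup\Sigma=Q_-\cup\Sigma$, which is the claim.

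I expect the only genuinely delicate point to be step~(i). It is essential that $\Psi$ be constructed globally rather than locally near the seam $\{x_{d-1}=x_d=0\}$: the one-sided way in which $\Sigma_0$ is attached there is a bi-Lipschitz invariant of the germ, so the ``missing'' half of the top face cannot be created near the seam but must be routed around through the adjacent face $S$---which is precisely the effect of $\mu$---and the remaining work is the bookkeeping needed to check that the resulting map glues consistently across the faces of $\partial K$ and remains bi-Lipschitz. The ingredients of steps~(ii)--(iii), namely coning extensions over convex bodies and the gluing lemma for bi-Lipschitz maps, are classical and cause no difficulty.
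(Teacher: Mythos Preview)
The paper does not prove this lemma; it merely quotes it from \cite{haller2009holder}, so there is no argument in the present paper to compare your proposal against. That said, your construction is essentially correct and is in the spirit of the standard argument: unfold the corner on $\partial K$ by a piecewise-affine map in the $(x_{d-1},x_d)$ cross-section, extend over the remaining faces, cone to $K$, and extend to $\mathbb{R}^d$.

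Two small points. First, in step~(ii) you write that the coning extension ``fixes $\operatorname{int}K$''; you mean that it maps $\operatorname{int}K$ onto $\operatorname{int}K$ setwise (only the cone point is fixed), which is all you use afterwards. Second, step~(iii) can be simplified considerably: the cone map
\[
c+t(y-c)\ \longmapsto\ c+t\bigl(g(y)-c\bigr),\qquad y\in\partial K,\ t\ge 0,
\]
is positively $1$-homogeneous about $c$, hence globally bi-Lipschitz on all of $\mathbb{R}^d$ with the same constants as on $K$, and it already sends $K$ onto $K$ and $\Sigma_0$ onto $\Sigma$. No interpolation to the identity across a collar is needed. Your collar construction, as written, would require a bi-Lipschitz isotopy from $g$ to $\operatorname{id}$ on $\partial K$; this is available here (because $g$ was itself built by coning), but it has to be spelled out, whereas the global cone map avoids the issue entirely. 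Finally, note that you tacitly take $\Sigma$ to be the \emph{relative interior} of the top face; with that (standard) convention your equality $g(\Sigma_0)=\Sigma$ holds on the nose, whereas with the paper's literal convention $Q=[-1,1]^d$ one would miss a lower-dimensional edge---a harmless discrepancy, but worth flagging.
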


\section{An Elliptic Mixed Boundary Value Problem}
In this section we prove a H\"older regularity result for linear elliptic equations with mixed boundary conditions and measurable, bounded coefficients with explicit control of the H\"older norm in terms of the data. It is the stationary counterpart of Theorem~\ref{theorem:parabolic_quantitative_holder} and of independent interest. The theorem is in the spirit of \cite{stampacchia1960problemi}. However, we extend the results from \cite{stampacchia1960problemi} to Lipschitz domains with a very weak compatibility condition on the Dirichlet-Neumann partition $\partial\Omega = \Gamma_D \cup \Gamma_N$ of the boundary. We follow closely the proof in \cite{haller2009holder} and extend it by explicitly controlling the appearing constants.

\subsection{Elliptic Result}
\begin{theorem}[Quantitative H\"older Control for Mixed Boundary Value Problems]\label{thm:quantitative_holder_regularity}
    Let $\Omega \subset \mathbb{R}^d$ be bounded and open with $d\in \{2,3,4\}$, consider a partition $\partial\Omega = \Gamma_N \cup \Gamma_D$ into Neumann and Dirichlet boundary and assume that $\Omega\cup\Gamma_N$ is Gr\"oger regular. Let $\mathcal{M} \subset L^\infty(\Omega,\mathbb{R}^{d\times d})$ be a set of matrix-valued, measurable functions with a common lower bound $\nu > 0$ on the ellipticity constants and a common upper bound $M$ on the $L^\infty(\Omega,\mathbb R^{d\times d})$ norm. For $A\in\mathcal{M}$ define the operator
    \begin{equation}
        -\operatorname{div}\left(A\nabla\cdot\right) + 1: H^1_D(\Omega) \to H^1_D(\Omega)^*, \quad u\mapsto \int_\Omega A\nabla u\nabla\cdot + u\cdot \mathrm dx.
    \end{equation}
    Then, for every $q>d$ and $A\in\mathcal M$ there exists $\alpha >0$ such that
    \begin{equation*}
        \left( -\operatorname{div} \left(A\nabla\cdot \right) + 1 \right)^{-1}:W^{-1,q}_D(\Omega) \to C^\alpha(\Omega)
    \end{equation*}
    is continuous. Stronger, for all $A\in\mathcal M$ we may choose the same $\alpha >0$ and can estimate the operatornorms
    \begin{equation}
        \sup_{A\in\mathcal M}\left \lVert \left( -\operatorname{div} \left(A\nabla\cdot \right) + 1 \right)^{-1}  \right\rVert_{\mathcal{L}(W^{-1,q}_D(\Omega), C^\alpha(\Omega))} < \infty.
    \end{equation}
\end{theorem}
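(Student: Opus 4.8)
The plan is to revisit the proof of Theorem 5.1 in \cite{haller2009holder} and track every constant, so that the implicit dependence on $A\in\mathcal M$ collapses to a dependence on the bounds $\nu$ and $M$ only. The scheme of that proof is a localization argument combined with a Campanato-type growth estimate: one fixes $q>d$ and $u = \left(-\operatorname{div}(A\nabla\cdot)+1\right)^{-1}f$ for $f\in W^{-1,q}_D(\Omega)$, and establishes a Morrey--Campanato decay estimate of the form $\int_{\Omega\cap B_r(x_0)} |u - u_{x_0,r}|^2 \,\mathrm dx \leq C r^{d-2+2\alpha}\left(\norm{f}_{W^{-1,q}_D(\Omega)}^2 + \norm{u}_{H^1_D(\Omega)}^2\right)$ uniformly in $x_0\in\overline\Omega$, from which H\"older continuity with exponent $\alpha$ and the norm bound follow by the Campanato isomorphism. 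I would carry this out in the following order.

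First I would record the a priori energy estimate: testing the equation with $u$ itself gives $\min(\nu,1)\norm{u}_{H^1_D(\Omega)}^2 \leq \langle f, u\rangle \leq \norm{f}_{W^{-1,q}_D(\Omega)}\norm{u}_{W^{1,q'}_D(\Omega)}$, and since $q'<2$ one controls $\norm{u}_{W^{1,q'}_D(\Omega)} \lesssim |\Omega|^{1/q'-1/2}\norm{u}_{H^1_D(\Omega)}$, yielding $\norm{u}_{H^1_D(\Omega)} \leq C(\Omega,\nu)\norm{f}_{W^{-1,q}_D(\Omega)}$ with the dependence on $\nu$ only through the lower bound $\min(\nu,1)$. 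Second, I would set up the boundary localization: by Gr\"oger regularity, around each boundary point there is a bi-Lipschitz chart flattening $\Omega\cup\Gamma_N$ to one of the model configurations $Q_-$, $Q_-\cup\Sigma$, $Q_-\cup\Sigma_0$; the chart transforms the equation into one of the same structural type with a new coefficient matrix $\tilde A$ whose ellipticity constant and $L^\infty$ bound depend only on $\nu$, $M$, and the bi-Lipschitz constants of the chart. Because $\overline\Omega$ is compact it is covered by finitely many such charts with a uniform bound on their bi-Lipschitz constants, so the transformed problems form a family with uniform $(\tilde\nu,\tilde M)$. Third, on the model domains one proves the Campanato growth estimate; this is where the dimension restriction $d\le 4$ enters (through the Sobolev embedding $W^{-1,q}\hookrightarrow H^{-1}$ combined with the interplay between the Campanato exponent and $d$), and it is essentially Stampacchia's interior and boundary estimates made quantitative. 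The key point for us is that the constant in the growth estimate depends on $\tilde A$ only through $\tilde\nu$ and $\tilde M$, which one checks by inspecting that the only places $\tilde A$ enters are the ellipticity lower bound and the upper bound in the Caccioppoli and comparison inequalities.

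Finally I would patch: combining the finitely many boundary estimates with the classical interior Campanato estimate gives the global decay estimate with a constant $C(\Omega,\nu,M,q)$, and the Campanato isomorphism theorem on the Lipschitz domain $\Omega$ converts it into $\norm{u}_{C^\alpha(\Omega)} \leq C(\Omega,\nu,M,q)\left(\norm{f}_{W^{-1,q}_D(\Omega)} + \norm{u}_{H^1_D(\Omega)}\right)$, and then the a priori estimate from the first step absorbs $\norm{u}_{H^1_D(\Omega)}$. Since every constant produced along the way was shown to depend on $A$ only through $\nu$ and $M$, taking the supremum over $A\in\mathcal M$ is harmless, which is exactly the claimed uniform operator-norm bound; that $\alpha$ can be chosen uniformly follows because the Campanato exponent is determined by the same structural constants. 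The main obstacle I anticipate is not any single estimate but the bookkeeping in the localization step: one must verify carefully that the bi-Lipschitz change of variables preserves the $W^{-1,q}_D$ duality structure and the mixed boundary condition, and that the transformed right-hand side's norm is controlled by $\norm{f}_{W^{-1,q}_D(\Omega)}$ times a constant depending only on the chart — this is the technical heart that \cite{haller2009holder} treats qualitatively and that we must make quantitative.
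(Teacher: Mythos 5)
Your overall blueprint --- localize, flatten via the Gr\"oger charts, establish quantitative interior and boundary H\"older estimates, and patch --- is the same strategy the paper follows, so this is not a genuinely different route; the difference is mostly packaging (Campanato growth estimates vs.\ the Kinderlehrer--Stampacchia oscillation bound on balls and cuboids). However, there are concrete gaps in the sketch as written.

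The main one concerns the localization step. After multiplying by a cut-off $\eta$, the equation for $\eta v$ on the local patch acquires lower-order right-hand-side terms, notably $\eta v$ and $(A\nabla v)\cdot\nabla\eta$. For any Stampacchia/Campanato argument one must place this right-hand side in $W^{-1,p}$ with $p>d$, and the gradient term is a priori only in $L^2$, which does \emph{not} embed into $W^{-1,p}$ with $p>d$ for $d\ge 3$. The ingredient that rescues this in the paper (via Lemma~\ref{lemma:localization_ii}) is Gr\"oger's higher gradient integrability, Theorem~\ref{thm:higher_gradient_integrability}: the solution operator maps $W^{-1,2+\varepsilon}_D$ into $W^{1,2+\varepsilon}_D$ with a bound \emph{uniform} over $\mathcal{M}$, so $\nabla v\in L^{2+\varepsilon}$ and the localized right-hand side lands where it should. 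You never invoke this, yet it is both where the dimension restriction $d\le 4$ actually enters (through $H^1\hookrightarrow L^4$ for the $\eta v$ term and the corresponding constraint on $p_2$ for the gradient term, not through $W^{-1,q}\hookrightarrow H^{-1}$, which is dimension-free for $q\ge 2$) and where the uniformity over $A\in\mathcal{M}$ is secured. Two smaller points: your Campanato decay exponent should be $r^{d+2\alpha}$, not $r^{d-2+2\alpha}$, for the isomorphism $\mathcal{L}^{2,d+2\alpha}\cong C^\alpha$ to apply; and on the model domain $Q_-\cup\Sigma$ the paper handles the Neumann face by an explicit even reflection (Lemma~\ref{lemma:reflection_principle}) so as to land on a pure Dirichlet problem on $Q$ --- this also requires verifying, as the paper does, that the reflected coefficient matrix retains the same ellipticity constant and $L^\infty$ bound, which is not automatic and should be stated rather than folded into ``Stampacchia's boundary estimates.''
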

\begin{proof}
    The idea of the proof is to localize the equation by a partition of unity, additionally employing the Lipschitz transformations from the definition of a Gr\"oger regular set. Using a suitable reflection technique at the Neumann boundary, this allows to apply H\"older regularity results for pure, homogeneous Dirichlet problems either on a ball or a cuboid. In these cases quantitative regularity results exist. The details of the proof are carried out throughout this section. As only the quantitative aspects of the transformations are missing, we pay special attention to these and keep the remaining aspects of the proof brief, referring to \cite{haller2009holder} when necessary.
\end{proof}

\subsection{Known Regularity Results}
We review the known regularity results that we need in the proof of the main theorem. We begin with a classical H\"older regularity result for elliptic equations without mixed boundary conditions.
\begin{theorem}[Theorem C.2 in \cite{kinderlehrer2000introduction}]\label{thm:oscillation_control_pure_dirichlet}
    Let $\Omega \subset \mathbb R^d$ be a ball or a cuboid, $f\in L^q(\Omega,\mathbb{R}^d)$ with $q>d$ and $q>2$. Assume that $A \in L^\infty(\Omega,\mathbb{R}^{d\times d})$ is uniformly elliptic with ellipticity constant $\nu >0$ and $L^\infty(\Omega,\mathbb{R}^{d\times d}$ bound $M >0$. Then, there exist $K=K(\nu,M,\Omega,d) >0$ and $\alpha = \alpha(\nu,M,\Omega,d)\in(0,1)$ such that for the solution $u\in H^1_0(\Omega)$ of 
    \begin{equation*}
        \int_\Omega A\nabla u\nabla(\cdot)\mathrm dx = \int_\Omega f\cdot\nabla(\cdot)\mathrm dx \quad \text{in } H^1_0(\Omega)^*
    \end{equation*}
    it holds $u\in C^0(\Omega)$ and 
    \begin{equation}\label{equation:oscillation_control}
        \max_{\overline{\Omega}\cap B_r(x)}u(x) - \min_{\overline{\Omega}\cap B_r(x)}u(x) = \underset{\overline{\Omega}\cap B_r(x)}{\operatorname{osc}}u \leq K \lVert f \rVert_{L^q(\Omega,\mathbb{R}^d)}\cdot r^\alpha.
    \end{equation}
\end{theorem}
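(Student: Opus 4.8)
Here is how I would prove Theorem~\ref{thm:oscillation_control_pure_dirichlet}. It is a De Giorgi--Nash--Moser boundary oscillation estimate, and the plan is to establish it by the classical De Giorgi level-set iteration, exploiting the homogeneous Dirichlet condition together with the convexity of a ball or a cuboid. Throughout, write $\tilde u\in H^1(\mathbb R^d)$ for the extension of $u$ by zero outside $\Omega$; this is admissible because $u\in H^1_D(\Omega)=H^1_0(\Omega)$. The argument splits into three parts: a global $L^\infty$ bound, a one-step oscillation reduction that is valid \emph{uniformly at interior and boundary points}, and a geometric iteration of the latter.

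First I would assemble the De Giorgi toolkit. The Caccioppoli inequality comes from testing the weak formulation with $\eta^2(u-k)_+$ for $k\ge 0$ (respectively $\eta^2(u-k)_-$ for $k\le 0$), $\eta$ a cutoff; near a boundary point these test functions are admissible \emph{without any regularity of} $\partial\Omega$ precisely because $u\in H^1_0(\Omega)$, so on the relevant ball $(\tilde u-k)_\pm$ coincides with $(u-k)_\pm\in H^1_0(\Omega)$. H\"older's inequality on the source term produces a remainder $\norm{f}_{L^q}^2\,|A_{k,\rho}|^{1-2/q}$, $A_{k,\rho}$ the relevant superlevel set inside $B_\rho$; combined with the Sobolev inequality and a level iteration this yields the local sup estimate
\[
  \sup_{B_{\rho/2}}(\tilde u-k)_+ \;\le\; C\,\rho^{-d/2}\,\norm{(\tilde u-k)_+}_{L^2(B_\rho)} + C\,\norm{f}_{L^q}\,\rho^{\beta_0},\qquad \beta_0:=1-\tfrac{d}{q}\in(0,1),
\]
where $\beta_0>0$ exactly because $q>d$ (and $q>2$ guarantees $f\in L^2_{\mathrm{loc}}$, so the energy method applies). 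Sharpening this gives the De Giorgi improvement lemma: there is $\lambda\in(0,1)$, depending only on $d,\nu,M$ and a parameter $c_0>0$, such that if $\sup_{B_{2\rho}}(\tilde u-k)_+\le\omega_0$ and $|\{(\tilde u-k)_+>0\}\cap B_{2\rho}|\le(1-c_0)|B_{2\rho}|$, then $\sup_{B_\rho}(\tilde u-k)_+\le(1-\lambda)\omega_0+C\norm{f}_{L^q}\rho^{\beta_0}$; the symmetric statement holds for $(\tilde u-k)_-$.

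Second I would deduce the oscillation reduction. Fix $x_0\in\overline\Omega$ and write $\omega(\rho)=\operatorname{osc}_{B_\rho(x_0)}\tilde u$. If $B_{2\rho}(x_0)\subset\Omega$, the standard dichotomy --- one of the two half-level sets occupies at least half of $B_{2\rho}(x_0)$ --- feeds the improvement lemma with $c_0=\tfrac12$ and gives $\omega(\rho)\le(1-\lambda)\omega(2\rho)+C\norm{f}_{L^q}\rho^{\beta_0}$. If instead $B_{2\rho}(x_0)\not\subset\Omega$, pick a boundary point $z$ with $|z-x_0|\le 2\rho$; since a ball or a cuboid is convex, the supporting hyperplane at $z$ forces $|B_s(z)\setminus\Omega|\ge\tfrac12|B_s(z)|$ for every $s>0$, and $\tilde u\equiv 0$ there. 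Applying the improvement lemma in a ball centered at $z$ of radius comparable to $\rho$ to $(\tilde u-k)_+$ with $k=\max(M_{2\rho}/2,0)$ when $M_{2\rho}\ge\tfrac14\omega(2\rho)$ --- then $k\ge0$, so $\{(\tilde u-k)_+>0\}$ misses the positive-measure set where $\tilde u=0$ --- and, symmetrically, to $(\tilde u-k)_-$ with $k=\min(m_{2\rho}/2,0)$ otherwise, one again obtains, after bookkeeping of the (fixed) dilation factor $\Lambda$ of the radii, $\omega(\rho)\le(1-\lambda')\,\omega(\Lambda\rho)+C\norm{f}_{L^q}\rho^{\beta_0}$, with $\lambda'$ and $C$ depending only on $d,\nu,M$ and the shape of $\Omega$.

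Finally, this inequality is iterated over the scales $\Lambda^{-k}R_0$ by the standard dyadic lemma, producing $\alpha=\alpha(\lambda',\Lambda,\beta_0)\in(0,\beta_0]$ and $C$ with $\omega(\rho)\le C(\rho/R_0)^\alpha\bigl(\omega(R_0)+\norm{f}_{L^q}R_0^{\beta_0}\bigr)$ for $\rho\le R_0:=\operatorname{diam}\Omega$. Since $\omega(R_0)\le 2\norm{u}_{L^\infty(\Omega)}$, it remains to invoke the classical Stampacchia/De Giorgi global bound for the homogeneous-Dirichlet, divergence-form problem on the bounded set $\Omega$, namely $\norm{u}_{L^\infty(\Omega)}\le C(d,\nu,M,q,\Omega)\norm{f}_{L^q}$ (another level-set iteration, on $(|u|-k)_+$, using Poincar\'e on the bounded $\Omega$). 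Combining, $\operatorname{osc}_{\overline\Omega\cap B_\rho(x_0)}u=\omega(\rho)\le K\norm{f}_{L^q}\rho^\alpha$ for $\rho\le R_0$, and trivially for $\rho>R_0$; in particular $u\in C^0(\overline\Omega)$, and $\alpha,K$ depend only on $\nu,M,d,\Omega$. I expect the only genuinely substantial point --- everything else being the routine De Giorgi machinery, whose constants automatically depend on $(\nu,M,d,\Omega)$ alone --- to be the boundary case of the improvement lemma: verifying that the homogeneous Dirichlet datum together with the $\tfrac12$-density of the complement of a convex domain is a legitimate substitute for the interior dichotomy, including the choice of levels $k$ that keeps the tested superlevel sets inside $\Omega$.
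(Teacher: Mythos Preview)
Your proof sketch is correct and self-contained: a De Giorgi--Nash--Moser level-set argument with the boundary oscillation reduction handled via the exterior half-density property of convex domains. The paper, by contrast, does not prove Theorem~\ref{thm:oscillation_control_pure_dirichlet} at all. Its entire proof reads: the result is established in \cite{kinderlehrer2000introduction} for domains of class~$s$, a class that trivially contains balls and cuboids, and that suffices here. In other words, the theorem is quoted from the literature, not derived.

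What your approach buys is self-containment and transparency about the constants: you make explicit that the boundary case works because convexity of $\Omega$ forces $|B_s(z)\setminus\Omega|\ge\tfrac12|B_s(z)|$ at every boundary point, and that the admissibility of the truncated test functions requires no boundary regularity beyond $u\in H^1_0(\Omega)$. What the paper's approach buys is brevity: since the theorem is a classical result and the paper's only genuine contribution at this point is to note that balls and cuboids fall under the standing hypotheses of the cited reference, a one-line citation is adequate for its purposes. Your sketch is, in essence, a reconstruction of what one would find (with minor variations) by following the citation.
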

\begin{proof}
    In \cite{kinderlehrer2000introduction}, the result is proven for domains of class $s$ which trivially includes balls and cuboids. For us the result for balls and cuboids suffices.
\end{proof}
The above result implies a control of the H\"older norm. We collect this fact in a Corollary.
\begin{corollary}\label{corollary:holder_control_pure_dirichlet}
    Assume we are in the situation of Theorem \ref{thm:oscillation_control_pure_dirichlet}. Then
    \begin{equation*}
        \left(-\operatorname{div}A\nabla \right)^{-1}:W_0^{1,q'}(\Omega)^* \to C^\alpha(\Omega)
    \end{equation*}
    is well defined and continuous with its operatornorm bounded by
    \begin{equation*}
        \norm{\left(-\operatorname{div}A\nabla \right)^{-1}}_{\mathcal{L}(W_0^{1,q'}(\Omega)^*, C^\alpha(\Omega))} \leq K,
    \end{equation*}
    with $K = K(\nu,M,\Omega,d)$, however, possibly different from the constant $K$ in Theorem \ref{thm:oscillation_control_pure_dirichlet}.
\end{corollary}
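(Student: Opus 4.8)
The plan is to read off the full $C^\alpha(\Omega)$ bound directly from the oscillation estimate \eqref{equation:oscillation_control}, after first reducing the abstract right-hand side $F\in W_0^{1,q'}(\Omega)^*$ to divergence-form data $f\in L^q(\Omega,\mathbb{R}^d)$ of the type appearing in Theorem~\ref{thm:oscillation_control_pure_dirichlet}.

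\textbf{Step 1 (representation of the data).} Given $F\in W_0^{1,q'}(\Omega)^*$, I would equip $W_0^{1,q'}(\Omega)$ with the norm $\varphi\mapsto\norm{\nabla\varphi}_{L^{q'}(\Omega,\mathbb{R}^d)}$, which is equivalent to the usual one by the Poincaré inequality with a constant $C_P=C_P(\Omega,d,q)$. Since $q>d\ge 2$ we have $q'\in(1,2)$, so $\nabla$ is a linear isometry of $W_0^{1,q'}(\Omega)$ onto a subspace $Z\subset L^{q'}(\Omega,\mathbb{R}^d)$ and $L^{q'}(\Omega,\mathbb{R}^d)^*=L^q(\Omega,\mathbb{R}^d)$. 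Transporting $F$ to a functional on $Z$, extending it to $L^{q'}(\Omega,\mathbb{R}^d)$ by Hahn--Banach without increasing the norm, and representing the extension, I obtain $f\in L^q(\Omega,\mathbb{R}^d)$ with $\langle F,\varphi\rangle=\int_\Omega f\cdot\nabla\varphi\,\mathrm dx$ for all $\varphi\in W_0^{1,q'}(\Omega)$ and $\norm{f}_{L^q(\Omega,\mathbb{R}^d)}\le C_P\norm{F}_{W_0^{1,q'}(\Omega)^*}$; thus $F=-\operatorname{div}f$. The equation in Theorem~\ref{thm:oscillation_control_pure_dirichlet} for this $f$ is exactly $(-\operatorname{div}A\nabla)u=F$, and since $q>2$ forces $f\in L^2(\Omega,\mathbb{R}^d)$ and hence $F\in H^{-1}(\Omega)$, the Lax--Milgram theorem gives a unique $u\in H^1_0(\Omega)$ which depends only on $F$, not on the chosen representative $f$. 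Hence $(-\operatorname{div}A\nabla)^{-1}$ is well defined and linear on $W_0^{1,q'}(\Omega)^*$.

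\textbf{Step 2 (from oscillation to Hölder norm).} For the seminorm, given $x,y\in\overline\Omega$ with $x\ne y$ put $r=|x-y|$; then $y\in\overline\Omega\cap B_r(x)$, so \eqref{equation:oscillation_control} yields $|u(x)-u(y)|\le\operatorname{osc}_{\overline\Omega\cap B_r(x)}u\le K\norm{f}_{L^q(\Omega,\mathbb{R}^d)}|x-y|^\alpha$, i.e.\ $[u]_{C^\alpha(\Omega)}\le K\norm{f}_{L^q(\Omega,\mathbb{R}^d)}$. For the supremum norm, take $r=2\operatorname{diam}\Omega$, so $\overline\Omega\subset B_r(x)$ for every $x\in\overline\Omega$; since $u\in H^1_0(\Omega)$ is continuous up to $\partial\Omega$ it vanishes there, whence $\min_{\overline\Omega}u\le0\le\max_{\overline\Omega}u$ and $\norm{u}_{L^\infty(\Omega)}\le\operatorname{osc}_{\overline\Omega}u\le K(2\operatorname{diam}\Omega)^\alpha\norm{f}_{L^q(\Omega,\mathbb{R}^d)}$. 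Adding the two estimates and inserting Step~1 gives $\norm{u}_{C^\alpha(\Omega)}\le K\bigl(1+(2\operatorname{diam}\Omega)^\alpha\bigr)C_P\norm{F}_{W_0^{1,q'}(\Omega)^*}$, which is the assertion (continuity being equivalent to this bound) with the new constant $K:=K(1+(2\operatorname{diam}\Omega)^\alpha)C_P$, still a function of $\nu,M,\Omega,d$ and of the fixed exponent $q$.

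\textbf{Main obstacle.} The argument is essentially routine; the only point needing a little care is Step~1, namely representing a functional in $W_0^{1,q'}(\Omega)^*$ by an $L^q$ vector field \emph{with control of the norm}, since this is what bridges the divergence-form data of Theorem~\ref{thm:oscillation_control_pure_dirichlet} and the dual space appearing in the statement. Alternatively one may invoke the standard identification $W^{-1,q}(\Omega)=\{-\operatorname{div}f:f\in L^q(\Omega,\mathbb{R}^d)\}$ on bounded domains together with the associated norm equivalence.
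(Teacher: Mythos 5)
Your proposal is correct and follows essentially the same route as the paper: derive the H\"older seminorm bound by taking $r=|x-y|$ in the oscillation estimate, derive the supremum bound by using that $u$ vanishes on $\partial\Omega$ together with an oscillation bound over all of $\overline\Omega$, and then transfer the $L^q$-field estimate to the dual norm via the standard representation of $W_0^{1,q'}(\Omega)^*$. The only differences are cosmetic: you spell out the Hahn--Banach/Poincar\'e argument for the norm-controlled representation $F=-\operatorname{div}f$ that the paper merely asserts, and you bound $\norm{u}_{L^\infty}$ by noting $\min u\le0\le\max u$ rather than comparing $u(x)$ with $u(x_0)$ at a fixed boundary point $x_0$; both give the same conclusion.
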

\begin{proof}
    We begin by showing that \eqref{equation:oscillation_control} yields a bound on the $C^\alpha(\Omega)$ norm of a solution $u$ to $-\operatorname{div}(A\nabla u) = f$. To this end, take $x,y\in\overline{\Omega}$ and consider the closed ball around $x$ with radius $r=|x-y|$. Then, $y\in B_r(x)$ and \eqref{equation:oscillation_control} yields
    \begin{equation*}
        |u(x)-u(y)| \leq \underset{\overline{\Omega}\cap B_r(x)}{\operatorname{osc}}u \leq K\lVert f \rVert_{L^q(\Omega,\mathbb{R}^d)}|x-y|^\alpha,
    \end{equation*}
    hence
    \begin{equation*}
        |u|_{C^\alpha(\Omega)} \leq K\lVert f \rVert_{L^q(\Omega,\mathbb{R}^d)}.
    \end{equation*}
    To bound the $C^0(\Omega)$ norm of $u$, note that $u$ vanishes on the boundary of $\Omega$. Let $x\in\overline{\Omega}$ and $x_0\in \partial\Omega$ and use again \eqref{equation:oscillation_control} to estimate
    \begin{equation*}
        |u(x)| \leq |u(x)-u(x_0)| \leq K\lVert f \rVert_{L^q(\Omega,\mathbb{R}^d)}|x-x_0|^\alpha \leq K\lVert f \rVert_{L^q(\Omega,\mathbb{R}^d)}\operatorname{diam}(\Omega)^\alpha.
    \end{equation*}
    Hence,
    \begin{equation*}
        \norm{u}_{C^\alpha(\Omega)} \leq K\max(1,\operatorname{diam}(\Omega)^\alpha)\lVert f \rVert_{L^q(\Omega,\mathbb{R}^d)}.
    \end{equation*}
    To conclude the proof, note that any abstract functional $\phi \in W^{1,q'}_0(\Omega)^*$ can be written in the form
    \begin{equation*}
        \phi = \int_\Omega f\cdot\nabla(\cdot)\mathrm dx
    \end{equation*}
    for some $f\in L^q(\Omega,\mathbb{R}^d)$ and clearly it holds for a constant $c=c(\Omega,d)$
    \begin{equation*}
        \norm{\phi}_{W_0^{1,q'}(\Omega)^*} \leq \lVert f \rVert_{L^q(\Omega,\mathbb{R}^d)} \leq c\cdot\norm{\phi}_{W_0^{1,q'}(\Omega)^*}.
    \end{equation*}
    Thus, we can estimate the operatornorm
    \begin{equation*}
        \norm{(-\operatorname{div}(A\nabla))^{-1}}_{\mathcal{L}(W_0^{1,q'}(\Omega)^*),C^\alpha(\Omega)} \leq c\cdot K\max(1,\operatorname{diam}(\Omega)^\alpha)
    \end{equation*}
    as asserted.
\end{proof}
The next result concerns higher integrability of the gradient of the solution of an elliptic equation subjected to mixed boundary conditions. It is essentially to Gr\"oger, see for example \cite{groger1989aw, groger1989resolvent} for the original work and \cite{haller2016elliptic} for a more recent proof that weakens the assumptions on the domain even further. However, we stay in the realm of Gr\"oger regular sets as this seems general enough for the applications we have in mind.
\begin{theorem}[Higher Gradient Integrability, Theorem 5.6 in \cite{haller2016elliptic}]\label{thm:higher_gradient_integrability}
    Let $\mathcal{M}\subset L^\infty(\Omega,\mathbb{R}^{d\times d})$ be a set of matrix valued functions with a common lower bound $\nu >0$ on the ellipticity constants and a common upper bound $M >0$ on the $L^\infty(\Omega,\mathbb{R}^{d\times d})$ norm. Furthermore, assume that $\Omega \cup \Gamma_N$ is Gr\"oger regular. Then, there is an open interval $I_{\mathcal{M}}$ around $2$ such that for all $A\in\mathcal{M}$ and $p\in I_{\mathcal{M}}$
    \begin{equation*}
        -\operatorname{div}(A\nabla) + 1: W^{1,p}_{D}(\Omega) \to W^{1,p'}_D(\Omega)^* 
    \end{equation*}
    is a linear homeomorphism and we have
    \begin{equation*}
        \sup_{p\in I_{\mathcal{M}}}\sup_{A\in\mathcal{M}}\norm{\left( -\operatorname{div}(A\nabla) + 1 \right)^{-1}}_{\mathcal{L}(W^{1,p}_{D}(\Omega), W^{1,p'}_D(\Omega)^*)} < \infty.
    \end{equation*}
\end{theorem}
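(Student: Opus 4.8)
\medskip
\noindent\emph{Proposed proof (sketch of the strategy).}
The plan is to settle the exponent $p=2$ by the Lax--Milgram lemma and then to perturb to exponents $p$ in a neighbourhood of $2$ by a Sneiberg-type stability argument on a complex interpolation scale, arranging that every constant depends on $\mathcal M$ only through $\nu$, $M$ and the geometry of $\Omega\cup\Gamma_N$. For the base case: for $A\in\mathcal M$ the form $a_A(u,v)=\int_\Omega A\nabla u\cdot\nabla v+uv\,\mathrm dx$ on $H^1_D(\Omega)$ is bounded with constant $\max(1,M)$ and coercive with constant $\min(1,\nu)$, uniformly in $A$, so $-\operatorname{div}(A\nabla)+1\colon H^1_D(\Omega)\to H^1_D(\Omega)^*$ is an isomorphism whose inverse has norm $\le\min(1,\nu)^{-1}$; moreover, by H\"older's inequality $a_A$ is bounded on $W^{1,p}_D(\Omega)\times W^{1,p'}_D(\Omega)$ with constant $\max(1,M)$ for every $p\in(1,\infty)$, so $T_A:=-\operatorname{div}(A\nabla)+1$ is in any case a bounded operator $W^{1,p}_D(\Omega)\to W^{1,p'}_D(\Omega)^*$ of norm $\le\max(1,M)$, uniformly in $A$ and $p$.

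The decisive step is to exhibit the interpolation structure: for $p$ near $2$ the family $\{W^{1,p}_D(\Omega)\}$ is a complex interpolation scale, $[W^{1,p_0}_D(\Omega),W^{1,p_1}_D(\Omega)]_\theta=W^{1,p_\theta}_D(\Omega)$ with $1/p_\theta=(1-\theta)/p_0+\theta/p_1$, with interpolation constants depending only on $\Omega,\Gamma_N,d$, and the family $\{W^{1,p'}_D(\Omega)^*\}$ parametrised by $p$ is then its dual scale (using reflexivity for $1<p<\infty$, after checking that the duality exponents reparametrise the scale consistently). To prove this I would use Gr\"oger regularity exactly as in the sketch for Theorem~\ref{thm:quantitative_holder_regularity}: cover $\partial\Omega$ by the bi-Lipschitz charts of Definition~\ref{definition:groger_regular_sets}, transport the local pieces to the model sets $Q_-$ and $Q_-\cup\Sigma$ (reducing $Q_-\cup\Sigma_0$ to $Q_-\cup\Sigma$ via the bi-Lipschitz map of Lemma~4.10 in \cite{haller2009holder}), and on each model piece build a bounded linear extension operator into $W^{1,p}$ of a larger open set by even reflection across the Neumann part and extension by zero across the Dirichlet part; since bi-Lipschitz changes of variables preserve $W^{1,p}$ up to constants and respect the vanishing-trace condition, glueing with a fixed partition of unity yields a $p$-independent extension operator $E\colon W^{1,p}_D(\Omega)\to W^{1,p}(\mathbb R^d)$ with restriction as a left inverse. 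Hence $W^{1,p}_D(\Omega)$ is a retract of the classical scale $\{W^{1,p}(\mathbb R^d)\}$, and retracts of interpolation scales are interpolation scales.

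With this in hand I would invoke Sneiberg's stability theorem. View $T_A$ as a bounded operator from the scale $\{W^{1,p}_D(\Omega)\}$ to the aligned scale $\{W^{1,p'}_D(\Omega)^*\}$ --- both shrink as $p$ increases, since $\Omega$ is bounded and $p\mapsto p'$ is decreasing, and the $p$-dependence sits entirely in the spaces, so the analyticity hypothesis is vacuous. At the interior point $p=2$ the operator $T_A$ is invertible with $\norm{T_A}\le\max(1,M)$ and $\norm{T_A^{-1}}\le\min(1,\nu)^{-1}$. Sneiberg's theorem then produces an open interval $I$ around $2$ and a constant $C$, both depending only on $\max(1,M)$, $\min(1,\nu)^{-1}$ and the interpolation constants above, such that $T_A\colon W^{1,p}_D(\Omega)\to W^{1,p'}_D(\Omega)^*$ is a linear homeomorphism with $\norm{T_A^{-1}}\le C$ for every $p\in I$. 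Since all these data are uniform over $A\in\mathcal M$, taking $I_{\mathcal M}:=I$ gives the claim.

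The step I expect to be the main obstacle is exhibiting the interpolation scale with $p$-uniform constants when $\Omega\cup\Gamma_N$ is only Gr\"oger regular, that is, constructing and estimating the extension operator through Lipschitz (rather than smooth) charts while keeping track of the interaction between reflection, change of variables and the partial trace condition; one also has to verify that these spaces meet the precise technical requirements of Sneiberg's theorem. A more hands-on alternative, closer to Gr\"oger's original reasoning, would avoid interpolation and instead localise the equation and iterate a Caccioppoli/reverse-H\"older (Gehring--Giaquinta--Modica) estimate on the model problems; the interpolation route is preferable here because it makes the uniformity over $\mathcal M$ transparent. In the article itself we simply quote the result from \cite{haller2016elliptic}.
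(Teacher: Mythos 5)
The paper's proof of this statement is not really a proof of the theorem at all; it only verifies that the standing assumptions of the paper imply Assumptions 2.3, 3.1 and 5.4 of \cite{haller2016elliptic} (Gr\"oger regularity $\Rightarrow$ Lipschitz domain $\Rightarrow$ Assumptions 2.3 and 4.11, Assumption 4.11 $\Rightarrow$ Assumption 3.1 via Theorem 4.15 there, and Assumption 5.4 is just measurability plus ellipticity) and then quotes the conclusion. You instead reconstruct, in outline, the proof of the cited result itself. Your strategy --- Lax--Milgram at the anchor $p=2$ with constants controlled by $(\nu,M)$ alone, exhibiting $\{W^{1,p}_D(\Omega)\}$ as a complex interpolation scale by building a $p$-uniform extension/retraction from the Gr\"oger charts, and then invoking Sneiberg's stability theorem to extrapolate invertibility and the inverse bound to an interval around $2$ --- is essentially the strategy used in \cite{haller2016elliptic} and in Gr\"oger's original work, so your sketch and the black-boxed theorem are consistent. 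Two small points worth tightening: (i) the classical Sneiberg theorem has no analyticity hypothesis (it concerns a single fixed operator bounded on the scale and invertible at one interior parameter), so that worry is moot; (ii) state the retract precisely: you want $W^{1,p}_D(\Omega)$ realized as a complemented subspace of $W^{1,p}(\mathbb R^d)$ uniformly in $p$ near $2$, with the retraction/coretraction pair coming from your reflection/zero-extension construction, since the naive restriction $W^{1,p}(\mathbb R^d)\to W^{1,p}(\Omega)$ does not encode the Dirichlet trace condition and so is not by itself a retraction onto the scale you need. You correctly flag the uniform construction of this extension through merely Lipschitz charts as the technical bottleneck; that is precisely where the bulk of the work in \cite{haller2016elliptic} sits. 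Since the paper treats the theorem as imported, your proposal is a useful complement to, rather than an alternative for, the argument actually written in the paper.
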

\begin{proof}
    This is a specialized version of Theorem 5.6 in \cite{haller2016elliptic}. We need to guarantee that our assumptions imply the Assumptions 2.3, 3.1 and 5.4 in the notation of that paper (which they a forteriori do). In fact, Gr\"oger regular sets are Lipschitz domains and this ensures Assumption 2.3 in \cite{haller2016elliptic} and also Assumption 4.11 there. Then, Assumption 4.11 implies Assumption 3.1 as shown in Theorem 4.15 in \cite{haller2016elliptic}. Finally, Assumption 5.4 only requires ellipticity and measurability of the functions $A\in\mathcal{M}$, a fact that we also assumed.
\end{proof}

\subsection{Technical Lemmas}
As the strategy to prove Theorem \ref{thm:quantitative_holder_regularity} consists of localization techniques we investigate in the following technical lemmas how this effects the H\"older control we are interested in. The localization goes through three possible stages: i) a localization by a partition of unity. This involves analyzing how the equation is changed when the solution is multiplied by a smooth cut-off function, ii) in the vicinity of $\partial\Omega$, the Lipschitz transformations to cuboids from the definition of Gr\"oger regular sets need to be employed. This yields a pure Dirichlet problem for the Dirichlet boundary, iii) at the Neumann boundary a reflection technique is used to also produce a pure Dirichlet problem.

The following is a quantitative version of Lemma 4.6 in \cite{haller2009holder}.
\begin{lemma}\label{lemma:localization_cut_off_i}
    Let $\Omega \subset \mathbb R^d$ be open and bounded with a partition $\partial\Omega = \Gamma_D\cup\Gamma_N$ in Dirichlet and Neumann boundary parts. Furthermore, let $\Omega \cup \Gamma_N$ be regular and $\mathcal{U}\subset \mathbb{R}^d$ open, such that $\Omega_\bullet \coloneqq \Omega\cap\mathcal{U}$ is also a Lipschitz domain. Furthermore, set $\Gamma_\bullet\coloneqq \Gamma_D\cap\mathcal{U}$ and let $\eta\in C^\infty_0(\mathbb{R}^d)$ with support in $\mathcal{U}$. For arbitrary but fixed $q\in[1,\infty)$ define the maps
    \begin{enumerate}
        \item [(i)] The multiplication-restriction operator 
        \begin{equation*}
            R_\eta: W^{1,q}_{\Gamma_D}(\Omega) \to W^{1,q}_{\Gamma_\bullet}(\Omega_\bullet), \quad v\mapsto \eta v_{|\Omega_\bullet}.
        \end{equation*}
        \item[(ii)] The multiplication-extension operator
        \begin{equation*}
            E_{\eta}: W^{1,q}_{\Gamma_\bullet}(\Omega_\bullet) \to W^{1,q}_{\Gamma_D}(\Omega), \quad v\mapsto \widetilde{\eta v}.
        \end{equation*}
        Here, the map $v\mapsto\tilde{v}$ denotes the extension by zero outside of $\Omega_\bullet$.
    \end{enumerate}
    Then, both maps are well defined, linear and continuous and we may estimate
    \begin{gather*}
        \lVert \eta v_{|\Omega_\bullet} \rVert_{W^{1,q}_{\Gamma_\bullet}(\Omega_\bullet)} \leq 2 \lVert \eta \rVert_{C^1(\Omega_\bullet)} \norm{v}_{W^{1,q}_{\Gamma_D}(\Omega)} \quad \& \quad \lVert \widetilde{\eta v} \rVert_{W^{1,q}_{\Gamma_D}(\Omega)} \leq 2\lVert \eta\rVert_{C^1(\Omega_\bullet)}\norm{v}_{W^{1,q}_{\Gamma_\bullet}(\Omega_\bullet)}.
    \end{gather*}
\end{lemma}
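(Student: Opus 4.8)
The plan is to verify the two estimates directly from the product rule, and then deal separately with the two "soft" points: that the images actually land in the claimed spaces (in particular that the trace conditions on $\Gamma_\bullet$ are inherited), and that $R_\eta$ and $E_\eta$ are genuinely well defined on the Sobolev spaces (not just on smooth functions).

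First I would establish the norm bounds. For $R_\eta$, take $v\in W^{1,q}_{\Gamma_D}(\Omega)$; by the Leibniz rule $\nabla(\eta v) = \eta\nabla v + v\nabla\eta$ a.e.\ on $\Omega_\bullet$, so
\begin{align*}
    \norm{\eta v}_{L^q(\Omega_\bullet)} &\le \norm{\eta}_{C^0(\Omega_\bullet)}\norm{v}_{L^q(\Omega_\bullet)} \le \norm{\eta}_{C^1(\Omega_\bullet)}\norm{v}_{L^q(\Omega)},\\
    \norm{\nabla(\eta v)}_{L^q(\Omega_\bullet)} &\le \norm{\eta}_{C^0(\Omega_\bullet)}\norm{\nabla v}_{L^q(\Omega_\bullet)} + \norm{\nabla\eta}_{C^0(\Omega_\bullet)}\norm{v}_{L^q(\Omega_\bullet)} \le \norm{\eta}_{C^1(\Omega_\bullet)}\bigl(\norm{\nabla v}_{L^q(\Omega)} + \norm{v}_{L^q(\Omega)}\bigr).
\end{align*}
Summing (or combining via the $W^{1,q}$ norm, with whatever convention the paper uses) gives the factor $2\norm{\eta}_{C^1(\Omega_\bullet)}$; the same computation, now using that $\widetilde{\eta v}$ and its weak gradient $\widetilde{\nabla(\eta v)}$ vanish outside $\Omega_\bullet$ (so extension by zero does not increase any $L^q$ norm), yields the bound for $E_\eta$. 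The one calculation worth doing carefully here is checking that the distributional gradient of $\widetilde{\eta v}$ on $\Omega$ is indeed $\widetilde{\eta\nabla v + v\nabla\eta}$ with no surface contribution: because $\eta$ is supported in $\mathcal U$, the product $\eta v$ vanishes near $\partial\Omega_\bullet\cap\Omega$ (the part of the boundary of $\Omega_\bullet$ interior to $\Omega$), so there is no jump across that interface, and across $\partial\Omega$ itself the function already vanishes in the trace sense since $\eta v$ inherits the $\Gamma_D$-trace from $v$; this is the standard argument that multiplying by a cutoff supported in $\mathcal U$ lets one extend by zero within $W^{1,q}$.

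Next I would handle membership in the correct spaces, i.e.\ the trace conditions. For $R_\eta$: $v$ vanishes on $\Gamma_D$ in the trace sense, hence on $\Gamma_D\cap\mathcal U = \Gamma_\bullet$, and multiplication by the (bounded, Lipschitz) function $\eta$ preserves vanishing traces, so $\eta v_{|\Omega_\bullet}\in W^{1,q}_{\Gamma_\bullet}(\Omega_\bullet)$; one invokes here that $\Omega_\bullet$ is a Lipschitz domain so the trace operator on $\partial\Omega_\bullet$ is well behaved. For $E_\eta$: $v\in W^{1,q}_{\Gamma_\bullet}(\Omega_\bullet)$ vanishes on $\Gamma_\bullet$, and $\widetilde{\eta v}$ additionally vanishes (with zero trace) on the remaining part of $\partial\Omega_\bullet$ because $\eta$ is compactly supported in $\mathcal U$; one then checks that the trace of $\widetilde{\eta v}$ on $\partial\Omega$ is supported in $\overline{\mathcal U}\cap\partial\Omega\subset\overline{\Gamma_D}$ up to the relatively closed complement, so that $\widetilde{\eta v}\in W^{1,q}_{\Gamma_D}(\Omega)$ — this is exactly the content of Lemma 4.6 in \cite{haller2009holder}, which I would cite for the qualitative statement and simply supplement with the quantitative bound above. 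Linearity of both maps is immediate since $v\mapsto\eta v$ and $v\mapsto\widetilde{v}$ are linear.

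The main obstacle, such as it is, is not the inequality — that is a one-line product rule — but the bookkeeping around the trace spaces: one must be sure that "vanishes on $\Gamma_\bullet$" together with "$\eta$ compactly supported in $\mathcal U$" really does force the right trace condition on all of $\partial\Omega_\bullet$ and, after extension, on $\partial\Omega$, and that the density characterization $W^{1,q}_{\Gamma_D}(\Omega) = \overline{\{\varphi\in C^\infty(\overline\Omega): \varphi = 0 \text{ near }\Gamma_D\}}$ (valid on Lipschitz domains) lets one reduce the verification to smooth functions where everything is classical. Since the excerpt already grants that $\Omega$, $\Omega_\bullet$ are Lipschitz domains and cites \cite{haller2009holder} for the qualitative well-definedness, I would lean on that and keep this part brief, presenting the norm estimate as the new, quantitative content.
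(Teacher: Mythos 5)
Your proposal matches the paper's proof: both derive the bound from the Leibniz rule exactly as you do and both delegate the qualitative well-definedness (including the trace bookkeeping you discuss at length) to Lemma 4.6 of \cite{haller2009holder}. The extra detail you supply on the absence of a surface contribution and on the inheritance of the $\Gamma_D$ trace is a fuller account of what the cited lemma provides, but it is not a different route.
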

\begin{proof}
    The well definedness of $R_\eta$ and $E_\eta$ was established in Lemma 4.6 in \cite{haller2009holder}. The estimates can be computed in the following way
    \begin{align*}
        \lVert \eta v_{|\Omega_\bullet} \rVert_{W^{1,q}_{\Gamma_\bullet}(\Omega_\bullet)} 
        &=
        \lVert \eta v \rVert_{L^q(\Omega_\bullet)} + \lVert \nabla(\eta v) \rVert_{L^q(\Omega_\bullet,\mathbb{R}^d)}
        \\
        &\leq 
        \lVert \eta v \rVert_{L^q(\Omega_\bullet)} + \lVert v\nabla\eta \rVert_{L^q(\Omega_\bullet,\mathbb{R}^d)} + \lVert \eta\nabla v \rVert_{L^q(\Omega_\bullet,\mathbb{R}^d)}
        \\
        &\leq
        \lVert \eta \rVert_{C^0(\Omega_\bullet)} \lVert v \rVert_{L^q(\Omega)} + \lVert \nabla \eta \rVert_{C^0(\Omega_\bullet)^d} \lVert v \rVert_{L^q(\Omega)} + \lVert \nabla v \rVert_{L^q(\Omega,\mathbb{R}^d)} \lVert \eta \rVert_{C^0(\Omega_\bullet)}
        \\
        &\leq
        2\lVert \eta \rVert_{C^1(\Omega_\bullet)}\lVert v \rVert_{W^{1,q}(\Omega)}.
    \end{align*}
    The expression $\lVert \widetilde{\eta v} \rVert_{W^{1,q}(\Omega)}$ can be estimated similarly.
\end{proof}

We also need a quantitative version of Lemma 4.7 in \cite{haller2009holder}. 
\begin{lemma}\label{lemma:localization_ii}
    Let $\Omega$, $\Gamma_N$, $\Gamma_D$, $\mathcal{U}$, $\eta$, $\Omega_\bullet$, $\Gamma_\bullet$, $R_\eta$ and $E_\eta$ be as in Lemma \ref{lemma:localization_cut_off_i} and denote by $A_\bullet$ the restriction of a function $A\in L^\infty(\Omega,\mathbb R^{d\times d})$ to the set $\Omega_\bullet$. For $f\in H^1_D(\Omega)^*$ denote by $v_f\in H^1_D(\Omega)$ the function that satisfies
    \begin{equation*}
        -\operatorname{div}\left( A\nabla v_f \right) + v_f = f, \quad \text{in }H^1_{\Gamma_D}(\Omega)^*.
    \end{equation*}
    Define the maps
    \begin{enumerate}
        \item [(i)] The adjoint map of $E_\eta$ for $q\in(1,\infty)$
        \begin{equation*}
            E^*_\eta: W^{1,q'}_{\Gamma_D}(\Omega)^* \to W^{1,q'}_{\Gamma_\bullet}(\Omega_\bullet)^*, \quad f\mapsto f(\widetilde{\eta (\cdot)})\eqqcolon f_\bullet
        \end{equation*}
        \item[(ii)] The functional $T_{v_f}$
        \begin{equation*}
            T_{v_f}: H^1_{\Gamma_\bullet}(\Omega_\bullet) \to \mathbb{R}, \quad w\mapsto \int_{\Omega_\bullet}vA_\bullet \nabla \eta\nabla w\mathrm dx.
        \end{equation*}
    \end{enumerate}
    Then, the localization of $v_f$ by $\eta$, i.e., $u_f\coloneqq (\eta v)_{|\Omega_\bullet}$ satisfies the equation
    \begin{equation}\label{eq:localized_elliptic_equation}
        -\operatorname{div}\left( A_\bullet \nabla u_f \right) = -(\eta v_f)_{|\Omega_\bullet} - (A_\bullet \nabla v_f)_{|\Omega_\bullet}(\nabla \eta)_{|\Omega_\bullet} + T_{v_f} + f_\bullet \eqqcolon f^\bullet \quad \text{in }H^1_{\Gamma_\bullet}(\Omega_\bullet)^*.
    \end{equation}
    Furthermore, if $2\leq d \leq 4$ and $f\in W^{1,q'}_{\Gamma_D}(\Omega)^*$ with $q > d$, then there exists $p > d$ such that $f^\bullet \in W^{1,p'}_{\Gamma_\bullet}(\Omega_\bullet)^*$ and the map
    \begin{equation*}
        \operatorname{Loc}: W^{1,q'}_{\Gamma_D}(\Omega)^* \to W^{1,p'}_{\Gamma_\bullet}(\Omega_\bullet)^*, \quad f\mapsto f^\bullet
    \end{equation*}
    possesses an estimate on its operatornorm only depending on $\nu$, $M$ and $\Omega$, i.e., 
    \begin{equation}\label{eq:norm_control_localized_rhs}
        \lVert f^\bullet \rVert_{W^{1,p'}_{\Gamma_\bullet}(\Omega_\bullet)^*} \leq C(\Omega,\nu,M)\lVert f \rVert_{W^{1,q'}_{\Gamma_D}(\Omega)^*}
    \end{equation}
\end{lemma}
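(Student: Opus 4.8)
The plan is to verify the localized equation \eqref{eq:localized_elliptic_equation} by a direct computation and then to track the regularity and norm of each term on the right-hand side separately. First I would test the strong form $-\operatorname{div}(A\nabla v_f) + v_f = f$ against $\eta w$ for $w \in H^1_{\Gamma_\bullet}(\Omega_\bullet)$, extended by zero to all of $\Omega$ (this is legitimate since $\widetilde{\eta w} \in H^1_{\Gamma_D}(\Omega)$ by Lemma \ref{lemma:localization_cut_off_i}). Expanding $\nabla(\eta w) = w\nabla\eta + \eta\nabla w$ and using the product rule $\nabla(\eta v_f) = v_f\nabla\eta + \eta\nabla v_f$ rearranges the identity into
\begin{equation*}
    \int_{\Omega_\bullet} A_\bullet \nabla u_f \nabla w \,\mathrm dx = -\int_{\Omega_\bullet}\eta v_f w\,\mathrm dx - \int_{\Omega_\bullet}(A_\bullet\nabla v_f)(\nabla\eta)\,w\,\mathrm dx + \int_{\Omega_\bullet} v_f A_\bullet \nabla\eta\nabla w\,\mathrm dx + f(\widetilde{\eta w}),
\end{equation*}
which is exactly the weak form of \eqref{eq:localized_elliptic_equation} with the four summands identified as the stated $-(\eta v_f)_{|\Omega_\bullet}$, $-(A_\bullet\nabla v_f)_{|\Omega_\bullet}(\nabla\eta)_{|\Omega_\bullet}$, $T_{v_f}$, and $f_\bullet = E_\eta^* f$ respectively.

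For the regularity and norm control I would treat the terms in two groups. The functional $f_\bullet = E_\eta^* f$ is handled by duality: Lemma \ref{lemma:localization_cut_off_i} gives $\norm{E_\eta}_{\mathcal L(W^{1,q'}_{\Gamma_\bullet}(\Omega_\bullet), W^{1,q'}_{\Gamma_D}(\Omega))} \leq 2\norm{\eta}_{C^1}$, hence $\norm{f_\bullet}_{W^{1,q'}_{\Gamma_\bullet}(\Omega_\bullet)^*} \leq 2\norm{\eta}_{C^1}\norm{f}_{W^{1,q'}_{\Gamma_D}(\Omega)^*}$, and $W^{1,q'}_{\Gamma_\bullet}(\Omega_\bullet)^* \hookrightarrow W^{1,p'}_{\Gamma_\bullet}(\Omega_\bullet)^*$ whenever $p \leq q$. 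The term $T_{v_f}$ and the gradient term $(A_\bullet\nabla v_f)(\nabla\eta)$ are both of the form ``an $L^r$ vector field paired against $\nabla w$'' or ``an $L^r$ function times $w$'': here Theorem \ref{thm:higher_gradient_integrability} is the key input, providing a $p_0 \in I_{\mathcal M}$ with $p_0$ possibly only slightly above $2$ but, crucially for $d \in \{2,3,4\}$, one can arrange the exponents so that $\nabla v_f \in L^{p_0}$ gives the vector field $v_f A_\bullet\nabla\eta$ and $(A_\bullet\nabla v_f)\nabla\eta$ the needed $L^{p'}$-dual membership after invoking Sobolev embedding $H^1 \hookrightarrow L^{s}$ for $s = 2d/(d-2)$ (or any $s<\infty$ when $d=2$) to control $v_f$; the operator norm of $f \mapsto v_f$ from $H^1_D(\Omega)^*$ to $H^1_D(\Omega)$, and from $W^{1,q'}_{\Gamma_D}(\Omega)^*$ to $W^{1,p_0}_{\Gamma_D}(\Omega)$, depends only on $\nu, M, \Omega$ by Lax–Milgram and Theorem \ref{thm:higher_gradient_integrability} respectively. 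Finally the zero-order term $-\eta v_f$ lies in $L^2 \subset W^{1,p'}_{\Gamma_\bullet}(\Omega_\bullet)^*$ with the same kind of bound. Collecting the four estimates and absorbing $\norm{\eta}_{C^1}$, $\norm{A}_{L^\infty} \leq M$, and the domain-dependent embedding constants into a single $C(\Omega,\nu,M)$ yields \eqref{eq:norm_control_localized_rhs}.

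The main obstacle is the bookkeeping of Lebesgue exponents so that the chosen $p$ is simultaneously $> d$, at most $q$, and such that $p' \leq q'$ while $v_f \in W^{1,p_0}$ from Theorem \ref{thm:higher_gradient_integrability} genuinely embeds the data terms into $W^{1,p'}_{\Gamma_\bullet}(\Omega_\bullet)^*$; this is where the restriction $d \leq 4$ enters, since one needs $2d/(d-2) \geq p'$ type inequalities (equivalently $\frac{1}{p'} \geq \frac{d-2}{2d}$, i.e.\ $p \leq 2d/(d+2)'$-type bounds) to remain compatible with $p > d$, and the slack is exactly exhausted at $d=4$. I would therefore first fix $q > d$ from the hypothesis, then choose $p_0 \in I_{\mathcal M}$ close enough to $2$ that $p := \min(q, p_0^{**})$ for the relevant Sobolev-conjugate chain still satisfies $p > d$ — this is possible precisely because in dimensions $2 \leq d \leq 4$ the target interval $(d, \infty)$ and the Gröger higher-integrability window overlap after one Sobolev step. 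Once $p$ is pinned down, each of the four terms is a routine continuity estimate, and the uniformity over $\mathcal M$ follows because every constant invoked (Lax–Milgram, Theorem \ref{thm:higher_gradient_integrability}, Sobolev embedding on the fixed Lipschitz domains $\Omega$ and $\Omega_\bullet$) is uniform in $A \in \mathcal M$.
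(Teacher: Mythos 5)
Your overall strategy coincides with the paper's: derive the localized equation by testing against $\widetilde{\eta w}$, then bound each of the four right-hand-side terms separately using the higher gradient integrability of Theorem~\ref{thm:higher_gradient_integrability} together with Sobolev embeddings, keeping all constants uniform over $\mathcal M$. The derivation of \eqref{eq:localized_elliptic_equation} and the duality argument for $f_\bullet = E_\eta^* f$ are correct.

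There is, however, a genuine slip on the zero-order term. You assert that $-\eta v_f \in L^2(\Omega_\bullet) \subset W^{1,p'}_{\Gamma_\bullet}(\Omega_\bullet)^*$ ``with the same kind of bound.'' This inclusion requires $W^{1,p'}_{\Gamma_\bullet}(\Omega_\bullet) \hookrightarrow L^2(\Omega_\bullet)$, which by Sobolev holds only when $(p')^* = dp'/(d-p') \geq 2$, i.e.\ $p' \geq 2d/(d+2)$, equivalently $p \leq 2d/(d-2)$. For $d=4$ this forces $p\leq 4 = d$, which is incompatible with the requirement $p>d$; even for $d=3$ your slack is not the one that is actually exhausted. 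The paper avoids this by using the stronger information $v_f\in L^4(\Omega_\bullet)$ — available precisely when $d\leq 4$ from $H^1\hookrightarrow L^4$ — and pairing against $w\in L^{4/3}(\Omega_\bullet)$, so that $p_1' = 4/3-\varepsilon$ and hence $p_1>4\geq d$. Since you already invoke $H^1\hookrightarrow L^{2d/(d-2)}$ for the $T_{v_f}$ and gradient terms, the fix is simply to apply the same embedding consistently to the zero-order term instead of falling back to $L^2$. With that correction your exponent bookkeeping (which you rightly flag as the delicate point) goes through, and the remainder of the argument matches the paper.
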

\begin{proof}
    Our extension of Lemma 4.7 in \cite{haller2009holder} is the explicit norm control in \eqref{eq:norm_control_localized_rhs}. To this end, we treat the terms in \eqref{eq:localized_elliptic_equation} separately. First, note that there is $\varepsilon >0$ such that
    \begin{equation*}
        W^{1,4/3-\varepsilon}_{\Gamma_\bullet}(\Omega_\bullet) \hookrightarrow L^{4/3}(\Omega_\bullet)
    \end{equation*}
    and we set $p_1' = 4/3 - \varepsilon$ which implies $p_1 > 4$. We then compute for $w\in W^{1,p_1'}_{\Gamma_\bullet}(\Omega_\bullet)$
    \begin{align*}
        \int_{\Omega_\bullet}\eta v_f w\mathrm dx 
        &\leq
        \lVert \eta \rVert_{L^\infty(\Omega_\bullet)}\lVert v_f \rVert_{L^4(\Omega_\bullet)}\lVert w \rVert_{L^{4/3}(\Omega_\bullet)}
        \\
        &\leq 
        C(\Omega)\lVert \eta \rVert_{L^\infty(\Omega_\bullet)}\lVert v_f \rVert_{H^1_{\Gamma_D}(\Omega)}\lVert w \rVert_{W^{1,p_1'}_{\Gamma_\bullet}(\Omega_\bullet)}
        \\
        &\leq 
        C(\Omega,\nu)\lVert \eta \rVert_{L^\infty(\Omega_\bullet)}\lVert f \rVert_{H^1_{\Gamma_D}(\Omega)^*}\lVert w \rVert_{W^{1,p_1'}_{\Gamma_\bullet}(\Omega_\bullet)}
        \\
        &\leq 
        C(\Omega,\nu)\lVert \eta \rVert_{L^\infty(\Omega_\bullet)}\lVert f \rVert_{W^{1,q'}_{\Gamma_D}(\Omega)^*}\lVert w \rVert_{W^{1,p_1'}_{\Gamma_\bullet}(\Omega_\bullet)}
    \end{align*}
    Taking suprema over unit balls in $W^{1,p_1'}_{\Gamma_\bullet}(\Omega_\bullet)$ and $W^{1,q'}_{\Gamma_D}(\Omega)^*$ we get that the map 
    \begin{equation*}
        W^{1,q'}_{\Gamma_D}(\Omega)^* \to W^{1,p_1'}_{\Gamma_\bullet}(\Omega_\bullet)^*, \quad f \mapsto -\int_{\Omega_\bullet}\eta v_f (\cdot)\mathrm dx
    \end{equation*}
    has its operatornorm bounded by $C(\Omega,\nu)\lVert \eta \rVert_{L^\infty(\Omega_\bullet)}$. 
    
    For the second term, note that we may factorize for all small enough $\varepsilon > 0$ using Theorem \ref{thm:higher_gradient_integrability}
    \begin{equation*}
        W^{1,q'}_{\Gamma_D}(\Omega)^* \hookrightarrow W^{1,(2+\varepsilon)'}_{\Gamma_D}(\Omega)^* \to W^{1,(2+\varepsilon)}_{\Gamma_D}(\Omega) \to L^{2+\varepsilon}(\Omega_\bullet) \hookrightarrow W^{1,p_2'}_{\Gamma_\bullet}(\Omega_\bullet)^*
    \end{equation*}
    given by
    \begin{equation*}
        f\mapsto f \mapsto v_f \mapsto A_\bullet \nabla v_f\nabla\eta_{|\Omega_\bullet} \mapsto \int_{\Omega_\bullet}A_\bullet \nabla v_f\nabla\eta (\cdot)\mathrm dx,
    \end{equation*}
    where $q' \leq (2+\varepsilon)'$ and $1/p_2 \geq (d-2-\varepsilon)/(d(2+\varepsilon))$, meaning $p_2 > 4$, the latter being possible due to $2 \leq d \leq 4$. The latter also implies the continuity of the embedding
    \begin{equation*}
        L^{2+\varepsilon}(\Omega_\bullet) \hookrightarrow W^{1,p_2'}_{\Gamma_\bullet}(\Omega_\bullet)^*.
    \end{equation*}
    The operatornorm of the composition then essentially relies on the operatornorm of 
    \begin{equation*}
        W^{1,(2+\varepsilon)'}_{\Gamma_D}(\Omega)^* \to W^{1,2+\varepsilon}_{\Gamma_D}(\Omega), \quad f\mapsto v_f.
    \end{equation*}
    However, Theorem \ref{thm:higher_gradient_integrability} shows that this is uniform with respect to the ellipticity constant $\nu$ of $A$, its $L^\infty(\Omega,\mathbb{R}^{d\times d})$ bound for $A$ and all small $\varepsilon > 0$.
    
    The third term works similar. Following \cite{haller2009holder} there is $\varepsilon > 0$ such that
    \begin{equation*}
        W^{1,2+\varepsilon}_{\Gamma_D}(\Omega) \hookrightarrow L^{4+\delta}(\Omega)
    \end{equation*}
    for a $\delta = \delta(d) > 0$. We estimate for $w \in W^{1,(4+\delta)'}_{\Gamma_\bullet}(\Omega_\bullet)$
    \begin{align*}
        \langle Tv_f,w \rangle_{W^{1,(4+\delta)'}_{\Gamma_\bullet}(\Omega_\bullet)}
        &\leq
        \lVert v_f \rVert_{L^{4+\delta}(\Omega_\bullet)} \lVert A \rVert_{L^\infty(\Omega,\mathbb{R}^{d\times d})}\lVert \nabla\eta \rVert_{L^\infty(\Omega_\bullet)}\lVert w \rVert_{W^{1,(4+\delta)'}_{\Gamma_\bullet}(\Omega_\bullet)}
        \\
        &\leq
        C(\nu,M,\Omega)\lVert f \rVert_{W^{1,(2+\varepsilon)'}_{\Gamma_D}(\Omega)^*} \lVert A \rVert_{L^\infty(\Omega,\mathbb{R}^{d\times d})}\lVert \nabla\eta \rVert_{L^\infty(\Omega_\bullet)}\lVert w \rVert_{W^{1,(4+\delta)'}_{\Gamma_\bullet}(\Omega_\bullet)}.
    \end{align*}
    The constant $C(\nu,M,\Omega)$ is again determined through Theorem \ref{thm:higher_gradient_integrability}. We set $p_2 = 4+\delta$.
    
    Finally, the mapping $f\mapsto f_\bullet$ is nothing but $E^*_\eta$ and thus $\lVert E^*_\eta\rVert = \lVert E_\eta \rVert$, the latter already being computed in Lemma \ref{lemma:localization_cut_off_i}. To conclude the proof we take $p = \min(p_1,p_2,p_3)$.
\end{proof}

We reproduce the following proposition from \cite{haller2009holder} as the notation it introduces and its content are essential for the remainder of the section.
\begin{proposition}[Proposition 4.9 in \cite{haller2009holder}]\label{proposition:lipschitz_transforms}
    Let $\Omega\subset\mathbb{R}^d$ be a bounded Lipschitz domain, let $\Gamma_N$ be an open subset of its boundary and denote by $\Gamma_D$ its complement in $\partial\Omega$. Let $\phi$ be bi-Lipschitz mapping defined on a neighborhood of $\Omega$ into $\mathbb{R}^d$ and denote $\phi(\Omega) = \widehat{\Omega}$ and $\phi(\Gamma_D) = \widehat{\Gamma}_D$. Then the following holds:
    \begin{enumerate}
        \item [(i)] For any $p\in(1,\infty)$, the mapping $\phi$ induces a linear homeomorphism
        \begin{equation*}
            \Phi_p:W^{1,p}_{\widehat{D}}(\widehat{\Omega}) \to W^{1,p}_{D}(\Omega), \quad u \mapsto u\circ\phi.
        \end{equation*}
        \item[(ii)] If $A$ is a member of $L^\infty(\Omega,\mathbb{R}^{d\times d})$, then
        \begin{equation*}
            -\Phi_{p'}^* \circ \operatorname{div}(A\nabla \Phi_p(\cdot)) = -\operatorname{div}(\widehat{A}\nabla(\cdot))  
        \end{equation*}
        with
        \begin{equation*}
            \widehat{A}(y) = \frac{D\phi(\phi^{-1}(y))}{\det(D\phi)(\phi^{-1}(y))}A(\phi^{-1}(y))(D\phi)^T(\phi^{-1}(y))
        \end{equation*}
        for almost all $y\in\widehat{\Omega}$.
        \item[(iii)] If $A$ is uniformly elliptic and essentially bounded, then so is $\widehat{A}$.
    \end{enumerate}
\end{proposition}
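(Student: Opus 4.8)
The plan is to dispatch the three assertions in order: (i) is the analytic core and relies on the behaviour of Sobolev functions under bi-Lipschitz changes of coordinates, while (ii) and (iii) are then essentially the change-of-variables formula together with elementary linear algebra. Since the statement is reproduced verbatim from \cite{haller2009holder}, I would carry out the computations only to the extent needed and refer to that reference for the routine parts.

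For (i), the key input is the chain rule for Sobolev functions: because $\phi$ is Lipschitz it is differentiable almost everywhere by Rademacher's theorem, and because $\phi^{-1}$ is Lipschitz as well, $\phi$ maps Lebesgue null sets to null sets and $|\det D\phi|$ is bounded above and below by positive constants a.e.\ (with explicit bounds in terms of the Lipschitz constants of $\phi$ and $\phi^{-1}$). Combining these facts, one shows that $u\mapsto u\circ\phi$ maps $W^{1,p}(\widehat\Omega)$ into $W^{1,p}(\Omega)$ with $\nabla(u\circ\phi)=(D\phi)^T\,(\nabla u)\circ\phi$ a.e., and the change-of-variables formula turns this into a bound on the operator norm; applying the same reasoning to $\phi^{-1}$ produces a two-sided inverse, so $\Phi_p$ is a linear homeomorphism on the full Sobolev spaces. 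That it restricts to a homeomorphism $W^{1,p}_{\widehat D}(\widehat\Omega)\to W^{1,p}_D(\Omega)$ follows because $\phi$ respects traces: the restriction of $\phi$ to the boundary is bi-Lipschitz from $\Gamma_D$ onto $\widehat\Gamma_D=\phi(\Gamma_D)$, composition with it commutes with the trace operator, and hence $u$ has vanishing trace on $\widehat\Gamma_D$ if and only if $u\circ\phi$ has vanishing trace on $\Gamma_D$.

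For (ii), I would argue by testing. Fix $u\in W^{1,p}_{\widehat D}(\widehat\Omega)$ and $w\in W^{1,p'}_{\widehat D}(\widehat\Omega)$ and compute, using (i) and the chain rule,
\[
\langle -\operatorname{div}(A\nabla\Phi_p u),\Phi_{p'}w\rangle
=\int_\Omega A\,\nabla(u\circ\phi)\cdot\nabla(w\circ\phi)\,dx
=\int_\Omega \bigl(D\phi\,A\,(D\phi)^T\bigr)\,(\nabla u\circ\phi)\cdot(\nabla w\circ\phi)\,dx .
\]
The substitution $y=\phi(x)$, under which $dx=|\det D\phi(\phi^{-1}(y))|^{-1}\,dy$, rewrites the right-hand side as $\int_{\widehat\Omega}\widehat A\,\nabla u\cdot\nabla w\,dy$ with $\widehat A$ exactly as in the statement, the sign of $\det D\phi$ being constant a.e.\ on the connected components so that the absolute value may be omitted. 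Since this holds for all test functions $w$ and $\Phi_{p'}$ is onto by (i), we obtain $-\Phi_{p'}^*\circ\operatorname{div}(A\nabla\Phi_p(\cdot))=-\operatorname{div}(\widehat A\nabla(\cdot))$. For (iii), boundedness of $\widehat A$ is immediate from $D\phi\in L^\infty$, the lower bound on $|\det D\phi|$, and $A\in L^\infty$, and symmetry of $\widehat A$ is inherited from that of $A$; for ellipticity one estimates, for $\xi\in\mathbb R^d$,
\[
\widehat A(y)\xi\cdot\xi=\frac{1}{\det D\phi}\,A\bigl((D\phi)^T\xi\bigr)\cdot\bigl((D\phi)^T\xi\bigr)
\ge \frac{\nu}{\det D\phi}\,|(D\phi)^T\xi|^2
\ge \frac{\nu}{(\operatorname{Lip}\phi)^d\,(\operatorname{Lip}\phi^{-1})^2}\,|\xi|^2 ,
\]
using $|(D\phi)^T\xi|\ge (\operatorname{Lip}\phi^{-1})^{-1}|\xi|$ and $|\det D\phi|\le(\operatorname{Lip}\phi)^d$ a.e. The only genuinely delicate point is the chain rule and trace-compatibility underlying (i) — that composition with a bi-Lipschitz homeomorphism preserves membership in $W^{1,p}$ and in $W^{1,p}_D$ with the stated gradient formula — which is classical but depends on the null-set and almost-everywhere-differentiability properties above; everything else is bookkeeping with the change-of-variables formula.
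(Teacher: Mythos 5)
The paper does not prove this proposition: it is reproduced verbatim from \cite{haller2009holder} (Proposition 4.9 there), and the only in-text remark is that it is cited because ``the notation it introduces and its content are essential for the remainder of the section.'' Your sketch is therefore not competing with any argument in the present paper; rather, it reconstructs the standard proof one would find in (or supply for) the cited reference, and it does so essentially correctly: Rademacher's theorem plus the Lipschitz chain rule and trace-compatibility for (i), testing with $\Phi_{p'}w$ and a change of variables for (ii), and a direct quadratic-form estimate with singular-value bounds on $D\phi$ for (iii). One point deserves sharpening. The change-of-variables formula produces $|\det D\phi|$, not $\det D\phi$, in the denominator, and the observation that the sign of $\det D\phi$ is locally constant does not by itself let you drop the modulus: if that sign were negative, the displayed $\widehat A$ would be negative definite and your ellipticity estimate would fail as written. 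The clean justification is that one may assume without loss of generality that $\phi$ is orientation-preserving (compose with a fixed reflection otherwise; this preserves the bi-Lipschitz and boundary-partition hypotheses), which is the convention implicit in \cite{haller2009holder}. With that remark inserted, your computations in (ii) and (iii) are correct, and the quantitative bounds you extract in terms of the Lipschitz constants of $\phi$ and $\phi^{-1}$ are exactly what the present paper later uses to keep the localized estimates uniform over the covering.
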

The last result we need is a reflection procedure that allows to transform a mixed Neumann-Dirichlet problem on the model domain $Q_-\cup\Sigma$ to a pure Dirichlet problem on $Q$ and thus makes Corollary \ref{corollary:holder_control_pure_dirichlet} applicable. It is based on Proposition 4.11 in \cite{haller2009holder}.

\begin{lemma}[Reflection Principle]\label{lemma:reflection_principle}
    For $x = (x_1,\dots,x_d)\in\mathbb{R}^d$ we set $x_- = (x_1,\dots,x_{d-1},x_d)$ and for a matrix $A\in\mathbb{R}^{d\times d}$ we define
    \begin{align*}
        A^{-}_{jk} = 
        \begin{cases}
        \; \displaystyle A_{jk} \quad&\text{if }j,k<d, 
        \\ 
        \; -A_{jk} &\text{if }j=d, k\neq d \text{ or } k=d \text{ and }j\neq d, 
        \\
        \; A_{jk} & \text{if }j=k=d.
        \end{cases}
    \end{align*}
    Now let $A$ denote a member of $L^\infty(Q_-,\mathbb{R}^{d\times d})$ and define a member of $L^\infty(Q,\mathbb{R}^{d\times d})$ via
    \begin{align*}
        \hat{A}(x) = 
        \begin{cases}
        A(x) \quad&\text{if }x\in Q_,
        \\
        (A(x_-))^- &\text{if }x_-\in Q_-.
        \end{cases}
    \end{align*}
    Let us set $\Gamma_D = \partial Q_-\setminus \Sigma$. Then for any fixed $p\in(1,\infty)$ it holds:
    \begin{enumerate}
        \item [(i)] If $v\in W^{1,p}_{\Gamma_D}(Q_-)$ satisfies $-\operatorname{div}(A\nabla v) = f\in W^{1,p'}_{\Gamma_D}(Q_-)^*$, then $-\operatorname{div}(\hat A\nabla \hat v ) = \hat f\in W^{1,p'}_0(Q)^*$ holds for
        \begin{align*}
            \hat v(x) = 
            \begin{cases}
                v(x) \quad&\text{if }x\in Q_,
                \\
                v(x_-) &\text{if }x_-\in Q_-
            \end{cases}
        \end{align*}
        and $\langle \hat f, u \rangle_{W^{1,p}_0(Q)} = \langle f, u|_{Q_-} + u_-|_{Q_-} \rangle_{W^{1,p}_{\Gamma_D(Q_-)}}$, where $u_-(x) = u(x_-)$.
        \item [(ii)] The map
        \begin{equation*}
            W^{1,p'}_{\Gamma_D}(Q_-)^* \to W^{1,p'}_0(Q)^*, \quad f\mapsto \hat f
        \end{equation*}
        is continuous.
        \item [(iii)] Furthermore, if $A\in L^\infty(Q_-,\mathbb{R}^{d\times d})$ has ellipticity constant $\nu$ and $L^\infty$ bound $M$, then so does $\hat A$.
    \end{enumerate}
\end{lemma}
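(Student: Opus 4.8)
The plan is to verify the three claims essentially by direct computation, exploiting the fact that the reflection $x \mapsto x_-$ is an involutive isometry of $\mathbb{R}^d$ that swaps $Q_-$ with its mirror image $Q_+ \coloneqq \{x \in Q \mid x_d > 0\}$, fixes $\Sigma$ pointwise, and carries $\Gamma_D = \partial Q_- \setminus \Sigma$ onto $\partial Q_+ \setminus \Sigma$. First I would record the chain rule for the reflection: if $u \in W^{1,p}(Q)$ and $u_-(x) \coloneqq u(x_-)$, then $\nabla u_-(x) = R\,(\nabla u)(x_-)$, where $R = \operatorname{diag}(1,\dots,1,-1)$; note $R^T A R = A^-$ in the notation of the statement. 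Thus for $\hat v$ as defined, on $Q_+$ we have $\nabla \hat v(x) = R\,(\nabla v)(x_-)$ and $\hat A(x)\nabla\hat v(x) = (A(x_-))^- R (\nabla v)(x_-) = R^T A(x_-) R R (\nabla v)(x_-) = R\,(A\nabla v)(x_-)$, so the conormal flux reflects consistently. The first point to check carefully is that $\hat v \in W^{1,p}(Q)$ with no spurious distributional derivative supported on $\Sigma$: since $v \in W^{1,p}_{\Gamma_D}(Q_-)$ does not in general vanish on $\Sigma$, one must argue that the even reflection across $\Sigma$ preserves weak differentiability — this is the standard even-extension lemma, and it works precisely because the $d$-th component of the gradient is odd under reflection while the tangential components are even, so the traces from both sides match on $\Sigma$; membership in $W^{1,p}_0(Q)$ follows because $\hat v$ vanishes on $\partial Q_- \setminus \Sigma$ and on its mirror image, which together make up all of $\partial Q$.

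For claim (i), I would then verify the weak formulation: for $u \in W^{1,p}_0(Q)$ (which may be taken smooth by density), split $\int_Q \hat A \nabla \hat v \cdot \nabla u = \int_{Q_-} A\nabla v \cdot \nabla u + \int_{Q_+} \hat A\nabla\hat v\cdot\nabla u$, and in the second integral substitute $y = x_-$; using the flux identity above and $\nabla u(x) = R\,(\nabla u_-)(x_-)$ one gets $\int_{Q_+}\hat A\nabla\hat v\cdot\nabla u\,dx = \int_{Q_-} A\nabla v\cdot\nabla u_-\,dy$. Hence $\int_Q \hat A\nabla\hat v\cdot\nabla u = \int_{Q_-} A\nabla v\cdot\nabla(u|_{Q_-} + u_-|_{Q_-})$. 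The function $u|_{Q_-} + u_-|_{Q_-}$ lies in $W^{1,p}_{\Gamma_D}(Q_-)$: it is in $W^{1,p}(Q_-)$, and each summand vanishes on $\Gamma_D$ (for $u|_{Q_-}$ this is immediate since $u \in W^{1,p}_0(Q)$; for $u_-|_{Q_-}$ note that $\Gamma_D$ reflects into $\partial Q_+ \setminus \Sigma \subset \partial Q$). Therefore the right-hand side equals $\langle f, u|_{Q_-} + u_-|_{Q_-}\rangle$, which by definition is $\langle \hat f, u\rangle$, proving the identity and simultaneously showing $\hat f$ is a well-defined element of $W^{1,p'}_0(Q)^*$.

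Claim (ii) is then bookkeeping: the map $u \mapsto u|_{Q_-} + u_-|_{Q_-}$ is bounded $W^{1,p}_0(Q) \to W^{1,p}_{\Gamma_D}(Q_-)$ — restriction has norm $\le 1$ and the reflection is an isometry on the relevant $W^{1,p}$ spaces since it is a bi-Lipschitz map with constant-modulus Jacobian — so its Banach-space adjoint $f \mapsto \hat f$ is bounded $W^{1,p'}_{\Gamma_D}(Q_-)^* \to W^{1,p'}_0(Q)^*$ with the same norm bound, in particular with a constant depending only on $p$ and $d$. Claim (iii) is a pointwise matrix computation: essential boundedness of $\hat A$ is clear since $|A^-| = |A|$ entrywise up to signs (more precisely $A^- = R^T A R$ with $R$ orthogonal, so $\|A^-\|_{\mathrm{op}} = \|A\|_{\mathrm{op}}$ a.e.), and ellipticity is preserved because $\xi^T A^-(x_-)\xi = \xi^T R^T A(x_-) R\xi = (R\xi)^T A(x_-)(R\xi) \ge \nu|R\xi|^2 = \nu|\xi|^2$ for a.e.\ $x \in Q_+$ and all $\xi \in \mathbb{R}^d$, while on $Q_-$ nothing changes. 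I expect the only genuinely delicate point to be the even-extension argument across $\Sigma$ in the first paragraph — namely confirming that no measure term on $\Sigma$ appears in the distributional gradient of $\hat v$ and in the divergence of $\hat A\nabla\hat v$ — which is where the specific sign pattern in the definition of $A^-$ is doing the real work; everything else is a substitution and a duality argument. Since the statement explicitly says this is based on Proposition 4.11 in \cite{haller2009holder}, I would cite that source for the extension lemma and concentrate the written proof on the flux identity and the norm bound.
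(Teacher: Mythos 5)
Your proposal is correct and matches the paper's strategy: the paper only proves part (iii) itself, citing Proposition 4.11 of \cite{haller2009holder} for (i) and (ii), which is exactly what you suggest doing in your final paragraph. For (iii), your identity $A^- = R^T A R$ with $R = \operatorname{diag}(1,\dots,1,-1)$ orthogonal is an equivalent but slightly tidier packaging of the paper's argument, which verifies $A^-\xi\cdot\xi = A\hat\xi\cdot\hat\xi$ by an entrywise computation and notes Frobenius-norm invariance separately.
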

\begin{proof}
    The only thing not included in Proposition 4.11 in \cite{haller2009holder} is (iii). However, for all $\xi\in\mathbb R^d$ it holds (as we compute later on)
    \begin{equation*}
        A^-\xi\cdot\xi = A\hat\xi\cdot\hat\xi,
    \end{equation*}
    where $\hat\xi = (-\xi_1,\dots,-\xi_{d-1},\xi_d)$. This implies
    \begin{equation*}
        \inf_{\xi\neq 0}A^-\xi\cdot\xi = \inf_{\xi\neq 0}A\hat\xi\cdot\hat\xi \geq \nu \lvert \hat\xi \rvert^2 = \nu \lvert \xi \rvert^2.
    \end{equation*}
    Furthermore, it holds $\norm{A^-} = \norm{A}$ in the Frobenius norm, hence $\hat A$ and $A$ share its bound as members of $L^\infty(Q_-,\mathbb R^{d\times d})$. Finally, we provide the computations for the above equality
    \begin{align*}
        A^-\xi\cdot\xi &= \sum_{i,j=1}^{d-1}A_{ij}\xi_j\xi_i + \sum_{i=1}^{d-1}(-A_{id})\xi_d\xi_i + \sum_{j=1}^{d-1}(-A_{dj})\xi_j\xi_d + A_{dd}\xi_d^2
        \\
        &=
        \sum_{i,j=1}^{d-1}A_{ij}(-\xi_j)(-\xi_i) + \sum_{i=1}^{d-1}A_{id}\xi_d(-\xi_i) + \sum_{j=1}^{d-1}A_{dj}(-\xi_j)\xi_d + A_{dd}\xi_d^2
        \\
        &=
        A\hat\xi\cdot\hat\xi.
    \end{align*}
\end{proof}

\subsection{Proof of the Main Result}
\begin{proof}[Proof of Theorem \ref{thm:quantitative_holder_regularity}]
    We follow the steps in \cite{haller2009holder}. For every $x \in \Omega$ choose a ball $B_x \subset \Omega$ centered at $x$ and contained in $\Omega$. For every $x\in \partial\Omega$, by the definition of Gr\"oger regularity, there exists an open neighborhood $U_x$ of $x$ and an open set $W_x$ together with a bi-Lipschitz map $\Psi_x:U_x\to W_x$ such that
    \begin{equation*}
        \Psi_x((\Omega\cup\Gamma_N)\cap U_x) = Q_-\quad \text{or} \quad \Psi_x((\Omega\cup\Gamma_N)\cap U_x) = Q_-\cup \Sigma
    \end{equation*}
    depending on $x\in\partial\Omega$. The system $\{ U_x \}_{x\in\partial\Omega}\cup\{B_x\}_{x\in\Omega}$ forms an open covering of $\overline{\Omega}$. We choose a finite subcovering $U_{x_1},\dots,U_{x_k},B_{x_1,\dots,B_{x_l}}$ and a subordinated smooth partition of unity $\eta_1,\dots,\eta_k,\zeta_1,\dots\zeta_l$. Let $A\in \mathcal{M}$, $q>d$ and $f\in W^{1,q'}_{\Gamma_D}(\Omega)^*$ and denote by $v$ the solution of 
    \begin{equation*}
        -\operatorname{div}(A\nabla v) + v = f, \quad \text{in }H^1_{\Gamma_D}(\Omega)^*.
    \end{equation*}
    Then we use the partition of unity to write
    \begin{equation*}
        v = \sum_{i=1}^k\eta_iv + \sum_{j=1}^l\zeta_jv
    \end{equation*}
    and we need to estimate $\lVert \eta_i v \rVert_{C^\alpha(\Omega)}$ and $\lVert \zeta_j v \rVert_{C^\alpha(\Omega)}$. This leads to three cases that need to be treated differently: First, the $\zeta_j v$ on the balls $B_{x_j}$, then $\eta_i v$ when $(\Omega\cup\Gamma_N)\cap U_x$ equals $Q_-$ and finally the case when $(\Omega\cup\Gamma_N)\cap U_x = Q_-\cup\Sigma$.
    
    \textbf{First Case.} We show that the H\"older norm of the $\zeta_jv$ can be controlled in terms of $C(B_{x_j},\nu,M)\lVert f \rVert_{W^{1,q'}_{\Gamma_D}(\Omega)^*}$. To this end, we employ Lemma \ref{lemma:localization_ii} with $\mathcal{U} = B_{x_j}$, hence $\Omega_\bullet = B_{x_j}$ and $\Gamma_\bullet = \emptyset$. Then $\zeta_jv_{|B_{x_j}}$ satisfies an equation of the form
    \begin{equation*}
        -\operatorname{div}(A_\bullet \nabla(\zeta_jv_{|B_{x_j}})) = g_j \quad \text{in }W^{1,p_j'}_0(B_{x_j})
    \end{equation*}
    with $p_j > d$ and it holds
    \begin{equation*}
        \lVert g_j \rVert_{W^{1,p_j'}_0(B_{x_j})} \leq C(B_{x_j},\nu,M) \cdot \lVert f \rVert_{W^{1,q'}_{\Gamma_D}(\Omega)^*}.
    \end{equation*}
    Hence, by Corollary \ref{corollary:holder_control_pure_dirichlet}, there is $\alpha_j\in(0,1)$ such that
    \begin{equation*}
        \lVert \zeta_j v \rVert_{C^{\alpha_j}(\Omega)} = \lVert \zeta_j v_{|B_{x_j}} \rVert_{C^{\alpha_j}(B_{x_j})} 
        \leq C(B_{x_j},\nu,M) \cdot \lVert f \rVert_{W^{1,p_j'}_{0}(B_{x_j})^*}
        \leq C(B_{x_j},\nu,M) \cdot \lVert f \rVert_{W^{1,q'}_{\Gamma_D}(\Omega)^*}.
    \end{equation*}
    
    \textbf{Second Case.} Here we assume that we use $\eta_j$ subordinated to $U_j$ with
    \begin{equation}\label{case_q_minus}
        \Psi_{x_j}((\Omega\cup\Gamma_N)\cap U_{x_j}) = Q_-.
    \end{equation}
    Setting $\Omega_j = \Omega\cap U_{x_j}$, Lemma \ref{lemma:localization_cut_off_i} shows that $\eta_j v_{|\Omega_j}$ is a member of $H^1_0(\Omega_j)$ and Lemma \ref{lemma:localization_ii} implies that $\eta_j v_{|\Omega_j}$ solves
    \begin{equation*}
        -\operatorname{div}(A_\bullet\nabla(\eta_j v_{|\Omega_j})) = f_j, \quad \text{in }H^1_{0}(\Omega_j)^*
    \end{equation*}
    with $f_j \in W^{1,p_j'}_{0}(\Omega_j)^*$ and $p_j > d$ and again
    \begin{equation*}
        \lVert f_j \rVert_{W^{1,p_j'}_{0}(\Omega_j)^*} \leq C(\Omega_j, \nu, M)\cdot \lVert f \rVert_{W^{1,q'}_{\Gamma_D}(\Omega)^*}.
    \end{equation*}
    Now, transform the function to $Q_-$ using Proposition \ref{proposition:lipschitz_transforms} with $\phi = \Psi^{-1}_{x_j}$ setting
    \begin{equation*}
        \psi_j \coloneqq \Phi_{p_j}(\eta_j v_{|\Omega_j}) = (\eta_j v_{|\Omega_j}) \circ \Psi_{x_j}^{-1}.
    \end{equation*}
    As we assumed \eqref{case_q_minus}, $\eta_j v_{|\Omega_j}$ is a member of $H^1_0(\Omega_j)$ and $\psi_j$ is a member of $H^1_0(Q_-)$. Furthermore, $\psi_j$ satisfies and equation of the form
    \begin{equation*}
        -\operatorname{div}(\tilde A \nabla \psi_j) = h_j\coloneqq (\Phi_{p_j}^*)^{-1}f_j \quad \text{in }W^{1,p_j'}_0(Q_-)^*
    \end{equation*}
    and by Corollary \ref{corollary:holder_control_pure_dirichlet} there is $\alpha_j\in(0,1)$ such that $\psi_j \in C^{\alpha_j}(Q_-)$ with
    \begin{equation*}
        \lVert \psi_j \rVert_{C^{\alpha_j}(Q_-)} \leq C(\nu,M,Q_-)\cdot\lVert h_j \rVert_{W^{1,p_j'}(Q_-)^*},
    \end{equation*}
    where we used that $\tilde A$ is still a bounded, measurable, elliptic matrix with possibly different boundedness and ellipticity constants, however controlled through the geometry of $\Omega_j$. As Lipschitz maps preserve H\"older continuity in a controlled way we also have
    \begin{equation*}
        \lVert \eta_j v_{|\Omega_j} \rVert_{C^{\alpha_j}(\Omega_j)} \leq C(\Omega_j)\cdot\lVert \psi_j \rVert_{C^{\alpha_j}(Q_-)}.
    \end{equation*}
    Finally, we may estimate
    \begin{align*}
        \lVert \eta_j v \rVert_{C^{\alpha_j}(\Omega)}
        =
        \lVert \eta_jv_{|\Omega_j} \rVert_{C^{\alpha_j}(\Omega_j)}
        \leq
        C(\Omega_j)\cdot \lVert \psi_j \rVert_{C^{\alpha_j}(Q_-)}
        &\leq
        C(\nu,M,\Omega_j)\cdot \lVert (\Phi_{p'_j}^*)^{-1}f_j \rVert_{W^{1,p_j'}_0(Q_-)^*}
        \\
        &\leq
        C(\nu,M,\Omega_j)\cdot \lVert f_j \rVert_{W^{1,p_j'}_{0}(\Omega_j)^*}
        \\
        &\leq
        C(\nu,M,\Omega_j)\cdot\lVert f \rVert_{W^{1,q'}_{\Gamma_D}(\Omega)^*}.
    \end{align*}
    
    \textbf{Third Case.} We use the same notation as in the second case but now it holds
    \begin{equation*}
        \Psi_{x_j}((\Omega\cup\Gamma_N)\cap U_{x_j}) = Q_-\cup\Sigma.
    \end{equation*}
    Setting $\Gamma_j = \partial\Omega_j\setminus\Gamma_N$, it holds again $-\operatorname{div}(A_\bullet \nabla (\eta_jv_{|\Omega_j})) = f_j$ in $H^1_{\Gamma_j}(\Omega_j)^*$ with $f_j\in W^{1,p_j'}_{\Gamma_j}(\Omega_j)^*$ and $p_j > d$ and an estimate of the form
    \begin{equation*}
        \lVert f_j \rVert_{W^{1,p_j'}_{\Gamma_j}(\Omega_j)^*} \leq C(\Omega_j,\nu,M)\cdot\lVert f \rVert_{W^{1,q'}_{\Gamma_D}(\Omega)^*}.
    \end{equation*}
    Now we transform to $Q_-$ as in the second case and then use the reflection principle, see Lemma \ref{lemma:reflection_principle} to transform to $Q$. This yields $\psi_j$ and $\hat\psi_j$, the latter solving a homogeneous problem on $Q$, the former as above, however with a Neumann condition on $\Sigma$. We may estimate for a suitable $\alpha_j\in(0,1)$
    \begin{align*}
        \lVert \eta_jv \rVert_{C^{\alpha_j}(\Omega)} 
        =
        \lVert \eta_j v \rVert_{C^{\alpha_j}(\Omega_j)}
        \leq 
        C(\Omega_j)\cdot \lVert \psi_j \rVert_{C^{\alpha_j}(Q_-)}
        &\leq
        C(\Omega_j)\cdot \lVert \hat\psi_j \rVert_{C^{\alpha_j}(Q)}
        \\&\leq
        C(\nu,M,\Omega_j)\cdot\lVert \hat h_j \rVert_{W^{1,p_j'}_0(Q)^*}
        \\&\leq
        C(\nu,M\Omega_j)\cdot\lVert (\Phi^*_{p_j'})^{-1}f_j \rVert_{W^{1,p_j'}_{\partial\Omega\setminus\Sigma}(Q_-)^*}
        \\&\leq
        C(\nu,M,\Omega_j)\cdot\lVert f_j \rVert_{W^{1,p_j'}_{\Gamma_j}(\Omega_j)^*}
        \\&\leq
        C(\nu,M,\Omega_j)\cdot\lVert f \rVert_{W^{1,q'}_{\Gamma_D}(\Omega)^*}.
    \end{align*}
    Taking the minimal $\alpha_j$ concludes the proof.
\end{proof}

\section{Parabolic H\"older Regularity}

\setcounter{definition}{0}

In this section, we prove Theorem~\ref{theorem:parabolic_quantitative_holder} which we restate here for the readers convenience.
\begin{theorem}\label{thm:main_repeat}
    Let $\Omega \subset \mathbb{R}^d$ with $d=2,3$ be a Lipschitz domain, $I=[0,T]$ a time interval, $\partial\Omega = \Gamma_N\cup\Gamma_D$ a partition of the boundary into a Dirichlet and a Neumann part, where both $\Gamma_N$ and $\Gamma_D$ are allowed to have vanishing measure. Assume that $\Omega\cup\Gamma_N$ is Gr\"oger regular, let $f\in L^p(I,L^2(\Omega))$ for $p\in[2,\infty)$, $D\in L^\infty(\Omega,\mathcal{M}_s)$ with ellipticity constant $\nu > 0$ and let $k > 0$ be a constant. For $v_0\in L^\infty(\Omega)$ denote by $v\in H^1(I,H^1_D(\Omega),H^1_D(\Omega)^*)$ the solution to 
    \begin{align*}
        \int_I \langle d_tv,\cdot\rangle_{H^1_D(\Omega)}\mathrm dt + \int_I\int_\Omega D\nabla v \nabla \cdot + kv(\cdot)\mathrm dx\mathrm dt
        &=
        \int_I\int_\Omega f(\cdot)\mathrm dx\mathrm dt \quad \text{in }L^2(I,H^1_D(\Omega))^*
        \\
        v(0) &= v_0.
    \end{align*}
    Then there is $\beta = \beta(p)\in (0,1)$ such that $v\in L^p(I,C^\beta(\Omega))$ and we may estimate
    \begin{equation*}
        \norm{v}_{L^p(I,C^\beta(\Omega))} \leq C\left(\Omega, T, \nu, \lVert D \rVert_{L^\infty(\Omega,\mathbb{R}^{d\times d})}, p, \beta \right)\cdot \left[ \lVert f \rVert_{L^p(I,L^2(\Omega))} + \lVert v_0 \rVert_{L^\infty(\Omega)} \right].
     \end{equation*}
     In the above estimate, if we fix $\Omega$ and $p$, only a lower bound for $\nu$ and upper bounds for $\norm{D}$ and $T$ determine the value of the constant $C$. This provides uniformity for $\nu\in[c_E,C_E]$, $D\in L^\infty(\Omega, \mathcal{M}_s)$ with $\norm{D}\leq C_B$ and time intervals $I^*=[0,T^*]$ with $T^* \leq T$. 
\end{theorem}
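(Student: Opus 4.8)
The plan is to split the parabolic problem into two subproblems by linearity: $v = v_1 + v_2$, where $v_1$ solves the equation with right-hand side $f$ and zero initial datum, and $v_2$ solves the homogeneous equation ($f=0$) with initial datum $v_0 \in L^\infty(\Omega)$. For each of these I would express the solution through the analytic semigroup $(e^{-t\mathcal{A}})_{t\ge0}$ generated on, say, $L^2(\Omega)$ by the operator $\mathcal{A} = -\operatorname{div}(D\nabla\,\cdot\,) + k$ with the mixed boundary conditions, and then exploit the elliptic Theorem~\ref{thm:quantitative_holder_regularity} to pass from an $L^2$- (or $W^{-1,q}_D$-) estimate in space to a $C^\beta(\Omega)$-estimate in space, tracking the time-integrability in the $L^p(I,\cdot)$ norm.

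For the initial-value part $v_2(t) = e^{-t\mathcal{A}}v_0$, the key point is that since $v_0\in L^\infty(\Omega)\hookrightarrow L^2(\Omega)$, for each fixed $t>0$ the function $v_2(t)$ lies in the domain of every power of $\mathcal{A}$; in particular $\mathcal{A}v_2(t) = \mathcal{A}e^{-t\mathcal{A}}v_0$, and standard analytic-semigroup smoothing gives $\|\mathcal{A}e^{-t\mathcal{A}}v_0\|_{L^2(\Omega)} \le C t^{-1}\|v_0\|_{L^2(\Omega)}$ — but this is not integrable to the power $p$ near $t=0$, so a cruder bound must be used near $t=0$. I would instead combine two facts: first, $e^{-t\mathcal{A}}$ is a contraction (or at least uniformly bounded, with constant depending only on $T$) on $L^\infty(\Omega)$ by the maximum principle for the parabolic equation, so $\|v_2(t)\|_{L^\infty(\Omega)}\le C(T)\|v_0\|_{L^\infty}$ uniformly in $t$; second, for $t$ bounded away from $0$, $\mathcal{A}^{1/2}$-smoothing plus the elliptic Theorem gives $\|v_2(t)\|_{C^\beta(\Omega)} \le C t^{-\theta}\|v_0\|_{L^2}$ for some $\theta<1/p$ — the exponent $\theta$ being exactly what forces the restriction on $\beta=\beta(p)$, since a smaller $\beta$ requires less smoothing and hence a smaller singular exponent. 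The resulting bound $\|v_2(t)\|_{C^\beta(\Omega)}\le C\min(1, t^{-\theta})\|v_0\|_{L^\infty}$ is in $L^p(I)$ provided $\theta p < 1$, which is how $\beta$ is chosen in terms of $p$.

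For the right-hand side part $v_1$, I would use maximal parabolic regularity for $\mathcal{A}$ on $L^p(I,L^2(\Omega))$ with zero initial datum, e.g. in the form of \cite{amann1995linear}, to obtain $d_t v_1, \mathcal{A}v_1 \in L^p(I,L^2(\Omega))$ with norm controlled by $\|f\|_{L^p(I,L^2)}$; in particular $v_1 \in L^p(I, D(\mathcal{A}))$, and then the elliptic Theorem~\ref{thm:quantitative_holder_regularity} (applied pointwise in $t$, with $W^{-1,q}_D$-data supplied by $L^2(\Omega)\hookrightarrow W^{-1,q}_D(\Omega)$ for $q$ slightly above $d$, valid since $d\le 3$) upgrades this to $v_1 \in L^p(I, C^\beta(\Omega))$ with the desired estimate. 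One has to be slightly careful: the elliptic theorem gives $C^\alpha$-control from $W^{-1,q}_D$-data, and $f - kv_1 - d_tv_1 \in L^2(\Omega)$ for a.e.\ $t$, so one writes $\mathcal{A}_0 v_1 := -\operatorname{div}(D\nabla v_1) + v_1 = (f - d_tv_1) + (1-k)v_1 \in L^2(\Omega) \hookrightarrow W^{-1,q}_D(\Omega)$ and applies the bound, integrating the resulting inequality over $I$ in the $p$-th power.

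The main obstacle I expect is the low-regularity initial datum: controlling $v_2$ near $t=0$ without losing $L^p$-integrability in time. This is precisely where the interplay between the admissible Hölder exponent $\beta$ and the integrability exponent $p$ enters, and it must be done so that all constants depend on the data only through $\Omega$, $p$, $\beta$, a lower bound on $\nu$, and upper bounds on $\|D\|_{L^\infty}$ and $T$ — in particular the semigroup smoothing constants and the maximal-regularity constant have to be shown to depend on $D$ only through these quantities, which is where the uniformity statements in Theorem~\ref{thm:higher_gradient_integrability} and Theorem~\ref{thm:quantitative_holder_regularity} are essential. A secondary technical point is justifying that the weak solution in $H^1(I,H^1_D(\Omega),H^1_D(\Omega)^*)$ coincides with the semigroup/maximal-regularity solution, which follows from uniqueness for the weak formulation.
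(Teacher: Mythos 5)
Your decomposition $v=v_1+v_2$ and the treatment of $v_1$ (Amann's maximal regularity on $L^p(I,L^2(\Omega))$ with zero initial datum, then upgrading $v_1(t)\in\operatorname{dom}(M)$ to $v_1(t)\in C^\alpha(\Omega)$ via Theorem~\ref{thm:quantitative_holder_regularity}) coincide with the paper. For $v_2$ you also correctly identify the two ingredients the paper uses: the maximum-principle bound $\|v_2(t)\|_{C^0(\Omega)}\le\|v_0\|_{L^\infty(\Omega)}$ and an analytic-semigroup smoothing bound.

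The gap is in how you get from these two ingredients to an $L^p$-integrable bound on $\|v_2(t)\|_{C^\beta(\Omega)}$. You invoke \emph{\lq$\mathcal{A}^{1/2}$-smoothing plus the elliptic Theorem\rq} to claim $\|v_2(t)\|_{C^\beta(\Omega)}\le Ct^{-\theta}\|v_0\|_{L^2}$ with $\theta<1/p$, but this step is not closed: Theorem~\ref{thm:quantitative_holder_regularity} gives only the embedding $\operatorname{dom}(\mathcal{A})\hookrightarrow C^\alpha(\Omega)$, i.e.\ information about the domain of the \emph{full} power of the operator. To run a fractional-power smoothing argument $\|\mathcal{A}^\theta e^{-t\mathcal{A}}v_0\|_{L^2}\lesssim t^{-\theta}\|v_0\|_{L^2}$ you would additionally need an embedding of $\operatorname{dom}(\mathcal{A}^\theta)$ (or of the interpolation space $[L^2,\operatorname{dom}(\mathcal{A})]_\theta$) into a H\"older space $C^\beta(\Omega)$, and such a characterization of fractional domains of a non-smooth divergence-form operator with mixed boundary conditions is not available from what is proved in the paper and is not elementary. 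The paper avoids this entirely: it uses the full-power smoothing $\|Mv_2(t)\|_{L^2}\le t^{-1}\|v_0\|_{L^2}$ (Brezis) to get $\|v_2(t)\|_{C^\alpha(\Omega)}\le C(1+t^{-1})$, and then closes the loop with the \emph{pointwise H\"older interpolation inequality}
\begin{equation*}
[u]_\beta \le \bigl(2\|u\|_{C^0(\Omega)}\bigr)^{1-1/p}\operatorname{diam}(\Omega)^{\alpha/p-\beta}\,[u]_\alpha^{1/p},\qquad \beta<\alpha/p,
\end{equation*}
which, combined with the $L^\infty$ bound, immediately yields $\|v_2(t)\|_{C^\beta(\Omega)}\lesssim (1+t^{-1})^{1/p}$ and hence $L^p$-integrability in time. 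This elementary interpolation inequality is the key technical tool missing from your plan; without it (or a rigorous fractional-domain embedding as a substitute), the bound $\|v_2(t)\|_{C^\beta(\Omega)}\le Ct^{-\theta}$ with $\theta<1/p$ is asserted but not established.
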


\setcounter{definition}{13}

\begin{proof}[Strategy of the Proof]   
    Here we discuss only the main ideas and provide the details in the course of the section. The first ingredient in the proof is the $C^\alpha(\Omega)$ regularity result for the stationary operator, see Theorem~\ref{thm:quantitative_holder_regularity}. This opens the door for maximal parabolic regularity results, however, the initial value as a member of $L^\infty(\Omega)$ does not suffice for a direct application of the theory, which would require $v_0$ to be a member of $H^1_D(\Omega)$, the trace space in this situation, compare to \cite{arendt2017jl}. Therefore, we propose to use the superposition principle for linear operators to split the equation into
    \begin{align*}
        d_tv_1 + \mathcal{M}v_1 &= f,
        \\
        v_1(0) &= 0
    \end{align*}
    and 
    \begin{align*}
        d_tv_2 + \mathcal{M}v_2 &= 0,
        \\
        v_2(0) &= v_0.
    \end{align*}
    The linearity of the equation implies that $v = v_1 + v_2$. This gives us the advantage to analyze $v_1$ and $v_2$ separately. Now, $v_1$ can be treated by a combination of the maximal regularity results in \cite{amann1995linear} and Theorem~\ref{thm:quantitative_holder_regularity}. For $v_2$ we can quantify the norm blow-up at the initial time-point using standard results from \cite{brezis2010functional}. More precisely, it holds
    \begin{equation*}
        \norm{v_2(t)}_{C^\alpha(\Omega)} \leq C\cdot\left(\frac1t \norm{v_0}_{L^2(\Omega)} + 1\right)
    \end{equation*}
    and using an interpolation result we are able to mitigate the singularity of $t \mapsto t^{-1}$ by reducing the H\"older exponent.
\end{proof}

\subsection{Proof of the Main Result} 
We need some basic facts from semi-group theory for linear, unbounded operators in a Hilbert space $H$, that is operators of the form $M:\operatorname{dom}(M)\subset H \to H$. However, we started with a linear, bounded and coercive operator defined on a full space $X$ taking values in its dual, i.e., $\mathcal{M}\in\mathcal{L}(X,X^*)$. If we are given a Gelfand triple structure $(i,X,H)$, that is $X$ and $H$ are Hilbert spaces and $i:X\to H$ is an embedding with dense image, i.e., linear, continuous and bounded, we see that the two concepts are closely related.
\begin{definition}
    Let $(i,X,H)$ be a Gelfand triple and $\mathcal{M}\in\mathcal{L}(X,X^*)$ a coercive bounded linear operator. We define its \emph{part in} $H$ as follows
    \begin{equation*}
        \operatorname{dom}(M) \coloneqq \left\{ v \in X \mid \text{there is }f\in H \text{ with }(f,\cdot)_H = \mathcal{M}v \right\}
    \end{equation*}
    and
    \begin{equation*}
        M:\operatorname{dom}(M)\subset H \to H, \quad Mv = R^{-1}\left(\mathcal{M}v\right)
    \end{equation*}
    where $R$ denotes the Riesz isometry of $H$.
\end{definition}
\begin{remark}
    Note that the above definition suppresses the embedding $i$ in various places, treating it like a set-theoretic inclusion. Furthermore, we stress that $M$ is well defined as a map since for every $\mathcal{M}v$ there is at most one $f\in H$ satisfying $(f,\cdot)_H = \mathcal{M}v$ as $i(X)$ is dense in $H$ by assumption.  
\end{remark}
\begin{lemma}\label{lemma:operator_to_semigroup}
    Let $(i,X,H)$ be a Gelfand triple and $\mathcal{M}\in\mathcal{L}(X,X^*)$ a coercive, bounded linear operator. Then, its part $M$ in $H$ is maximal monotone, thus densely defined. If $\mathcal{M}$ is self-adjoint\footnote{We call a map $T\in\mathcal{L}(X,X^*)$ self-adjoint if $T^*\circ J = T$, where $J:X\to X^{**}$ is the natural isometric embedding of a Banach space into its bi-dual and $T^*$ denotes the usual adjoint map.} as a member of $\mathcal{L}(X,X^*)$, then $M$ is self-adjoint as a densely defined operator in $H$.
\end{lemma}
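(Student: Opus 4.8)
The plan is to prove Lemma~\ref{lemma:operator_to_semigroup} by checking the standard characterization of maximal monotone operators via the Lax--Milgram theorem, and then to handle self-adjointness by transporting the symmetry of $\mathcal{M}$ through the Riesz isometry. I would first record the monotonicity of $M$: for $v \in \operatorname{dom}(M)$ we have $(Mv, v)_H = (R^{-1}\mathcal{M}v, v)_H = \langle \mathcal{M}v, v\rangle_{X^*\times X} \geq 0$ by coercivity of $\mathcal{M}$ (indeed $\geq c\|v\|_X^2 \geq 0$); here I suppress $i$ and use that the $H$-pairing extended along the Gelfand triple agrees with the duality pairing, which should be spelled out as the compatibility $(Rh, \cdot)_H = \langle h, i(\cdot)\rangle$ built into the triple. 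The mild subtlety worth a sentence is that monotonicity as stated needs $(Mv,v)_H\ge 0$, which is weaker than coercivity and hence automatic.

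Next I would establish maximality, i.e.\ that $I + M$ (or $\lambda I + M$ for some $\lambda > 0$) is surjective onto $H$. Given $g \in H$, I want $v \in \operatorname{dom}(M)$ with $v + Mv = g$, i.e.\ $i^*R v + \mathcal{M}v = i^* R g$ in $X^*$, which is a coercive, bounded, linear equation on $X$: the bilinear form $a(v,w) = (v,w)_H + \langle \mathcal{M}v, w\rangle$ is bounded (sum of the continuous embedding square and the bounded operator $\mathcal M$) and coercive ($a(v,v) \geq c\|v\|_X^2$ from coercivity of $\mathcal{M}$, discarding the nonnegative $H$-term). Lax--Milgram produces a unique $v \in X$ solving it; since $\mathcal{M}v = i^*R(g - v) \in i^*R(H)$, the definition of $\operatorname{dom}(M)$ is met and $Mv = g - v$. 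Maximal monotone operators are densely defined (this is the classical fact, e.g.\ from \cite{brezis2010functional}), giving the density claim for free. I would state this rather than reprove it.

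For the self-adjoint case, assume $\mathcal{M}^* \circ J = \mathcal{M}$. I would verify that $M$ is symmetric on its domain: for $v, w \in \operatorname{dom}(M)$,
\begin{equation*}
    (Mv, w)_H = \langle \mathcal{M}v, w\rangle_{X^*\times X} = \langle \mathcal{M}w, v\rangle_{X^*\times X} = (Mw, v)_H = (v, Mw)_H,
\end{equation*}
using the symmetry of $\mathcal{M}$ in the middle equality and reality/symmetry of the $H$ inner product. A symmetric operator that is additionally maximal monotone is self-adjoint: one shows $\operatorname{dom}(M^*) \subseteq \operatorname{dom}(M)$ by the surjectivity of $I + M$ — given $u \in \operatorname{dom}(M^*)$, pick $v \in \operatorname{dom}(M)$ with $(I+M)v = (I+M^*)u$, then $(I+M^*)(u - v) = 0$, and injectivity of $I + M^*$ (which follows from monotonicity of $M$, hence of $M^*$) forces $u = v \in \operatorname{dom}(M)$. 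This is again a textbook argument I would cite rather than expand.

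The main obstacle, such as it is, is bookkeeping rather than depth: one must be careful about the suppressed embedding $i$ and the identifications $X \hookrightarrow H \cong H^* \hookrightarrow X^*$, making sure that "$(f,\cdot)_H = \mathcal{M}v$" in the definition of $\operatorname{dom}(M)$ is read correctly as an identity in $X^*$ after composing with $i$, and that the Riesz isometry $R$ and the adjoint embedding $i^*$ are used consistently. Once the Gelfand-triple pairing conventions are pinned down, the monotonicity, the Lax--Milgram surjectivity step, and the symmetry computation are all short, and the density and self-adjointness conclusions are standard consequences of maximal monotonicity for which I would invoke \cite{brezis2010functional}.
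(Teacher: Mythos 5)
Your proof is correct and follows essentially the same route as the paper's: establish monotonicity from coercivity via the identity $(Mu,v)_H = \langle \mathcal{M}u,v\rangle_X$, obtain maximality from bijectivity of $\operatorname{Id}+M$, and conclude self-adjointness from symmetry plus maximal monotonicity via \cite{brezis2010functional}. The only difference is that you spell out the Lax--Milgram argument behind the surjectivity of $\operatorname{Id}+M$ and sketch the textbook derivation of self-adjointness from symmetry, both of which the paper simply asserts or cites.
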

\begin{proof}
    Let $u,v\in\operatorname{dom}(M)$ and note that by the definition of $\mathcal{M}$ it holds
    \begin{equation}\label{local_equation_m_mathcal_m}
        (Mu,v)_H = (R^{-1}(\mathcal{M}(u)),v)_H = \langle \mathcal{M}u,v \rangle_X.
    \end{equation}
    This identity makes clear that the coercivity of $\mathcal{M}$ implies the monotonicity of $M$. Additionally,
    \begin{equation*}
        \operatorname{Id}|_H + M:\operatorname{dom}(M) \to H
    \end{equation*}
    is bijective and hence $M$ is maximal monotone. If $\mathcal{M}$ is self-adjoint, then \eqref{local_equation_m_mathcal_m} shows that $M$ is symmetric. However, linear symmetric maximal monotone operators are self-adjoint, see \cite{brezis2010functional}.
\end{proof}

The following Proposition is tailored to allow the application of Hille-Yosida's celebrated theorem on solutions to the Cauchy problem.
\begin{proposition}\label{proposition:hille_yoside_prerequisits}
    Let $\Omega\subset\mathbb{R}^d$, $d=1,2,3$ be a bounded domain with a partition of the boundary into Dirichlet and Neumann part $\partial\Omega = \Gamma_N \cup \Gamma_D$. Both $\Gamma_D$ and $\Gamma_N$ are allowed to have vanishing measure. We assume that $\Omega\cup\Gamma_N$ is Gr\"oger regular. Further, let $D\in L^\infty(\Omega, \mathcal{M}_s)$ be given and assume it is elliptic with ellipticity constant $\nu > 0$. Let $k >0$, we define the operator
    \begin{equation*}
        \mathcal{M}: H^1_D(\Omega)\to H^1_D(\Omega)^*, \quad \mathcal{M}v = \int_\Omega D\nabla v\nabla\cdot + kv(\cdot)\mathrm dx.
    \end{equation*}
    Then its part in $L^2(\Omega)$ is maximal monotone and self-adjoint. Further, there exists $\alpha > 0$ such that we have the embedding 
    \begin{equation*}
        \left( \operatorname{dom}(M), \norm{\cdot}_{L^2(\Omega)} + \lVert\cdot \rVert_{L^2(\Omega)} \right) \hookrightarrow C^\alpha(\Omega)
    \end{equation*}
    together with the estimate
    \begin{equation*}
        \norm{u}_{C^\alpha(\Omega)} \leq C(\Omega,\nu,\norm{D}_{L^\infty(\Omega,\mathcal{M}_s)})\cdot \norm{u}_{\operatorname{dom}(M)}.
    \end{equation*}
    Here, the constant $C$ is precisely $\lVert \mathcal{M}^{-1} \rVert_{\mathcal{L}(L^2(\Omega),C^\alpha(\Omega))}$ and depends only on a lower bound for the ellipticity constant and an upper bound on $\norm{D}_{L^\infty(\Omega,\mathcal{M}_s)}$.
\end{proposition}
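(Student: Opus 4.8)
The plan is to obtain the first assertion from Lemma~\ref{lemma:operator_to_semigroup} applied to the Gelfand triple $(i,H^1_D(\Omega),L^2(\Omega))$, and the $C^\alpha(\Omega)$ embedding from the elliptic result Theorem~\ref{thm:quantitative_holder_regularity}.

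\textbf{Hypotheses of Lemma~\ref{lemma:operator_to_semigroup}.} First I would check that $\mathcal{M}$ is a bounded, coercive and self-adjoint element of $\mathcal{L}(H^1_D(\Omega),H^1_D(\Omega)^*)$. Boundedness is immediate from $|\langle\mathcal{M}u,w\rangle|\le\max(\norm{D}_{L^\infty},k)\,\norm{u}_{H^1(\Omega)}\norm{w}_{H^1(\Omega)}$. Coercivity follows from the ellipticity of $D$ together with the zeroth order term,
\[
  \langle\mathcal{M}u,u\rangle=\int_\Omega D\nabla u\cdot\nabla u+ku^2\,\mathrm dx\ge\nu\norm{\nabla u}_{L^2(\Omega)}^2+k\norm{u}_{L^2(\Omega)}^2\ge\min(\nu,k)\,\norm{u}_{H^1(\Omega)}^2,
\]
and it is exactly here that the term $ku$ plays the role usually played by a Poincar\'e inequality, so that no assumption on $\Gamma_D$ (in particular the case $|\Gamma_D|=0$) is needed. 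Self-adjointness in the sense of the footnote in Lemma~\ref{lemma:operator_to_semigroup} amounts to the symmetry $\langle\mathcal{M}u,w\rangle=\langle\mathcal{M}w,u\rangle$, which holds because $D(x)$ is symmetric for almost every $x$ and the $L^2(\Omega)$ inner product is symmetric. Lemma~\ref{lemma:operator_to_semigroup} then gives that the part $M$ of $\mathcal{M}$ in $L^2(\Omega)$ is maximal monotone, hence densely defined, and self-adjoint.

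\textbf{The $C^\alpha(\Omega)$ embedding.} The key observation is that every $u\in\operatorname{dom}(M)$ satisfies $\mathcal{M}u=(Mu,\cdot)_{L^2(\Omega)}$ with right-hand side in $L^2(\Omega)$, so it remains to feed this into Theorem~\ref{thm:quantitative_holder_regularity}. To this end I would fix an exponent $q$ with $d<q$, and, when $d=3$, additionally $q\le 6$. A short computation with the Sobolev embedding (note $q'=q/(q-1)<2$) yields $W^{1,q'}_D(\Omega)\hookrightarrow L^2(\Omega)$, hence by duality a continuous embedding $L^2(\Omega)\hookrightarrow W^{1,q'}_D(\Omega)^*=W^{-1,q}_D(\Omega)$ with constant depending only on $\Omega$, $d$ and $q$. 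This is the single point where $d\le 3$ is used: for $d=4$ the requirements $q>d$ and $q\le 4$ are incompatible, which is exactly why the proposition excludes higher dimensions (the case $d=1$ being immediate from $H^1(\Omega)\hookrightarrow C^{1/2}(\overline\Omega)$). Dividing the equation by $k$ turns the zeroth order coefficient into $1$ at the cost of replacing $\nu$, $\norm{D}_{L^\infty}$ by $\nu/k$, $\norm{D}_{L^\infty}/k$; Theorem~\ref{thm:quantitative_holder_regularity}, applied inside the admissible class $\{A\in L^\infty(\Omega,\mathcal{M}_s):A\text{ elliptic with constant}\ge\nu_0,\ \norm{A}_{L^\infty}\le M_0\}$ and for the fixed $q$, then furnishes some $\alpha\in(0,1)$ and
\[
  \norm{\bigl(-\operatorname{div}(D\nabla\cdot)+k\bigr)^{-1}}_{\mathcal{L}(W^{-1,q}_D(\Omega),C^\alpha(\Omega))}\le C(\Omega,\nu_0,M_0,k)
\]
depending only on the indicated quantities.

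\textbf{Conclusion.} Composing the two facts, for $u\in\operatorname{dom}(M)$ one has $u=\mathcal{M}^{-1}(Mu)$, where $\mathcal{M}^{-1}:L^2(\Omega)\to C^\alpha(\Omega)$ now denotes the composition of the embedding $L^2(\Omega)\hookrightarrow W^{-1,q}_D(\Omega)$ with the inverse from Theorem~\ref{thm:quantitative_holder_regularity}, so that
\begin{align*}
  \norm{u}_{C^\alpha(\Omega)}
  &\le\norm{\mathcal{M}^{-1}}_{\mathcal{L}(L^2(\Omega),C^\alpha(\Omega))}\,\norm{Mu}_{L^2(\Omega)}\\
  &\le\norm{\mathcal{M}^{-1}}_{\mathcal{L}(L^2(\Omega),C^\alpha(\Omega))}\,\norm{u}_{\operatorname{dom}(M)}.
\end{align*}
This yields $\operatorname{dom}(M)\hookrightarrow C^\alpha(\Omega)$, the asserted estimate, and the identification of $C$ with $\norm{\mathcal{M}^{-1}}_{\mathcal{L}(L^2(\Omega),C^\alpha(\Omega))}$; the uniformity statement in Theorem~\ref{thm:quantitative_holder_regularity} then shows this norm is controlled by a lower bound for $\nu$ and an upper bound for $\norm{D}_{L^\infty}$ (and the fixed $k$). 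I expect the only real work to be the exponent bookkeeping — establishing $L^2(\Omega)\hookrightarrow W^{-1,q}_D(\Omega)$ for some $q>d$, where the dimension restriction reappears — and checking that the constant inherited from Theorem~\ref{thm:quantitative_holder_regularity} genuinely depends only on the asserted data; everything else is a routine chain of continuous maps.
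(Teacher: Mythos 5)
Your proof follows essentially the same route as the paper's: apply Lemma~\ref{lemma:operator_to_semigroup} with the Gelfand triple $(\operatorname{Id},H^1_D(\Omega),L^2(\Omega))$ to get maximal monotonicity and self-adjointness, then derive the $C^\alpha$ embedding from Theorem~\ref{thm:quantitative_holder_regularity} and close with the graph-norm estimate $\norm{u}_{C^\alpha}\le\norm{\mathcal{M}^{-1}}_{\mathcal{L}(L^2,C^\alpha)}\norm{Mu}_{L^2}$. The one place you go beyond the paper's terse proof is in making explicit the Sobolev--duality chain $L^2(\Omega)\hookrightarrow W^{-1,q}_D(\Omega)$ for some $q>d$ and the reduction of $k$ to $1$, thereby explaining where the restriction $d\le 3$ actually enters; the paper only remarks that the dimension restriction is required without spelling this out.
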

\begin{proof}
    Using the Gelfand triple $(\operatorname{Id}_{|L^2(\Omega)}, H^1_D(\Omega),L^2(\Omega))$, we can apply Lemma~\ref{lemma:operator_to_semigroup} and deduce the maximal monotonicity of $M$. Further, the symmetry assumption on $D$ implies that $M$ is self-adjoint, again through Lemma~\ref{lemma:operator_to_semigroup}. It remains to show the embedding into H\"older spaces -- essentially due to Theorem~\ref{thm:quantitative_holder_regularity} -- which yields the existence of $\alpha >0$ such that
    \begin{equation*}
        M^{-1}:L^2(\Omega) \to C^\alpha(\Omega)
    \end{equation*}
    is well defined and continuous. This requires the assumption $d=1,2,3$. To see that the graph norm on $\operatorname{dom}(M)$ controls the $\alpha$-H\"older norm, we let $u\in \operatorname{dom}(M)\subset C^\alpha(\Omega)$. Then there exists a unique $f\in L^2(\Omega)$ such that $u = M^{-1}f$ and we compute
\begin{equation*}
    \lVert u \Vert_{C^\alpha(\Omega)} = \lVert M^{-1}f \Vert_{C^\alpha(\Omega)} \leq C \lVert f \Vert_{L^2(\Omega)} = C \lVert Mu \Vert_{L^2(\Omega)} \leq C \lVert u \Vert_{\operatorname{dom}(M)}.
\end{equation*}
The only appearing constant is the operator norm of $M^{-1}$ and Theorem~\ref{thm:quantitative_holder_regularity} guarantees a suitable bound of this norm.
\end{proof}

\begin{theorem}\label{theorem:brezis_estimate} Assume we are in the situation of Proposition~\ref{proposition:hille_yoside_prerequisits}. Then for every $v_0\in L^2(\Omega)$ there exists $\alpha > 0$ and 
\begin{equation*}
    v \in C^1((0,T], L^2(\Omega)) \cap C^0((0,T],C^\alpha(\Omega))
\end{equation*}
solving
\begin{align}\label{equation:cauchy_problem}
    v'(t) + Mv(t) &= 0 \quad \text{on }(0,T]
    \\
    v(0) &= v_0 \notag
\end{align}
Furthermore, it holds
\begin{equation*}
    \lVert v(t) \Vert_{C^\alpha(\Omega)} \leq C\left(\Omega, \nu, \lVert D \rVert_{L^\infty} \right)\left( 1 + \frac{1}{t} \right)\lVert v_0 \Vert_{L^2(\Omega)}.
\end{equation*}
More precisely, the constant $C\left(\Omega, \lfloor D \rfloor, \lVert D \rVert_{L^\infty} \right)$ is the operatornorm of the embedding $\operatorname{dom}(M)\hookrightarrow C^\alpha(\Omega)$.
\end{theorem}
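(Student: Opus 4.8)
The plan is to realise $v$ as the orbit $t\mapsto S(t)v_0$ of the contraction semigroup generated by $-M$, and then to combine the classical regularising effect of semigroups associated to self-adjoint maximal monotone operators with the embedding $\operatorname{dom}(M)\hookrightarrow C^\alpha(\Omega)$ from Proposition~\ref{proposition:hille_yoside_prerequisits}. By that proposition the part $M$ of $\mathcal M$ in $L^2(\Omega)$ is maximal monotone and self-adjoint, and since $k>0$ we even have $M\ge k\,\mathrm{Id}$, so $M$ is non-negative; hence, by the Hille--Yosida theorem, $-M$ generates a $C_0$-semigroup $(S(t))_{t\ge 0}$ of contractions on $L^2(\Omega)$. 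I would set $v(t)\coloneqq S(t)v_0$; this is the (unique) solution of \eqref{equation:cauchy_problem}.

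Next I would invoke the regularising effect for self-adjoint maximal monotone operators, as found in \cite{brezis2010functional} (spectral calculus gives an alternative, self-contained route). It provides, for every $v_0\in L^2(\Omega)$: the regularity $v\in C^1((0,T],L^2(\Omega))\cap C^0((0,T],\operatorname{dom}(M))$ with $\operatorname{dom}(M)$ carrying its graph norm, the identity $v'(t)=-Mv(t)$, and the quantitative smoothing estimate
\begin{equation*}
    \lVert Mv(t)\rVert_{L^2(\Omega)}\le \frac1t\,\lVert v_0\rVert_{L^2(\Omega)},\qquad t>0
\end{equation*}
(spectral theory even yields the smaller constant $1/(\mathrm{e}t)$, but $1/t$ is all that is needed). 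Together with the contraction bound $\lVert v(t)\rVert_{L^2(\Omega)}\le\lVert v_0\rVert_{L^2(\Omega)}$ this gives $\lVert v(t)\rVert_{\operatorname{dom}(M)}\le(1+\tfrac1t)\lVert v_0\rVert_{L^2(\Omega)}$ for $t>0$.

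Finally I would feed this into Proposition~\ref{proposition:hille_yoside_prerequisits}: for the $\alpha>0$ produced there, the embedding $\operatorname{dom}(M)\hookrightarrow C^\alpha(\Omega)$ is bounded and linear, so composing it with the $\operatorname{dom}(M)$-valued continuous curve $t\mapsto v(t)$ yields $v\in C^0((0,T],C^\alpha(\Omega))$, which together with the previous step is the asserted regularity. Writing $C=C(\Omega,\nu,\lVert D\rVert_{L^\infty})$ for the operator norm of that embedding, we conclude
\begin{equation*}
    \lVert v(t)\rVert_{C^\alpha(\Omega)}\le C\,\lVert v(t)\rVert_{\operatorname{dom}(M)}\le C\Bigl(1+\frac1t\Bigr)\lVert v_0\rVert_{L^2(\Omega)},
\end{equation*}
i.e.\ the claimed inequality with the claimed constant.

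The one genuinely non-trivial input is the second step: the instantaneous entry of the orbit into $\operatorname{dom}(M)$, with the explicit rate $1/t$ and with continuity of $t\mapsto v(t)$ in the graph norm. This is exactly where self-adjointness of $M$ — equivalently, the symmetry hypothesis on $D$ used in Proposition~\ref{proposition:hille_yoside_prerequisits} — enters, since it makes $(S(t))$ analytic. Everything else is soft functional analysis, so I do not expect any further obstacle.
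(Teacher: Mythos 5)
Your proof is correct and takes essentially the same approach as the paper: both invoke Brezis's regularising estimate (Theorem~7.7 in \cite{brezis2010functional}) for semigroups generated by self-adjoint maximal monotone operators to obtain $\lVert Mv(t)\rVert_{L^2(\Omega)}\le t^{-1}\lVert v_0\rVert_{L^2(\Omega)}$ together with the contraction bound, and then compose with the embedding $\operatorname{dom}(M)\hookrightarrow C^\alpha(\Omega)$ from Proposition~\ref{proposition:hille_yoside_prerequisits}. The only difference is presentational --- you spell out the Hille--Yosida generation and the analytic smoothing, while the paper simply cites the Brezis theorem and keeps that machinery implicit.
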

\begin{proof}
    From Theorem 7.7 in \cite{brezis2010functional} it follows that
    \begin{equation*}
        \lVert Mv(t) \Vert_{L^2(\Omega)} \leq \frac{1}{t}\lVert v_0 \Vert_{L^2(\Omega)} \quad \text{and} \quad \lVert v(t) \Vert_{L^2(\Omega)} \leq \lVert v_0 \Vert_{L^2(\Omega)}.
    \end{equation*}
    Using this and the embedding $\operatorname{dom}(M)\hookrightarrow C^\alpha(\Omega)$, we get
    \begin{align*}
        \lVert v(t) \Vert_{C^\alpha(\Omega)} \leq C \lVert v(t) \Vert_{\operatorname{dom}(M)} &= C\lVert v(t) \Vert_{L^2(\Omega)} + C\lVert Mv(t) \Vert_{L^2(\Omega)} 
        \\&
        \leq C\lVert v_0 \Vert_{L^2(\Omega)} + \frac{C}{t}\lVert v_0 \Vert_{L^2(\Omega)}.
    \end{align*}
\end{proof}

\begin{theorem}\label{thm:for_v_2}
    Assume we are in the situation of Proposition~\ref{proposition:hille_yoside_prerequisits} and assume that $v_0\in L^\infty(\Omega)$ and denote by $v\in C^1((0,T],L^2(\Omega))$ the solution to \eqref{equation:cauchy_problem}. Then for every $q\in(1,\infty)$ there exists $\beta = \beta(q)$ such that $v$ is a member of $L^q(I,C^\beta(\Omega)) \cap L^\infty(I,C^0(\Omega))$. Furthermore, we can bound the $L^q(I,C^\beta(\Omega))$ norm depending on the data of the problem in the following way
    \begin{equation}\label{local_equation_estimate}
        \lVert v \rVert_{L^q(I,C^\beta(\Omega))} \leq C\left(\Omega, \nu, \lVert D \rVert_{L^\infty}, \lVert v_0 \rVert_{L^\infty}, I, \alpha, q \right).
    \end{equation}
\end{theorem}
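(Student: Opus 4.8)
The plan is to bootstrap the pointwise-in-time Hölder bound of Theorem~\ref{theorem:brezis_estimate}, which blows up like $t^{-1}$ as $t\to0^+$, into an $L^q$-in-time bound by trading a small amount of Hölder regularity against integrability near $t=0$. Besides Theorem~\ref{theorem:brezis_estimate}, the two ingredients I need are a uniform-in-time $L^\infty$ bound $\|v(t)\|_{C^0(\Omega)}\le\|v_0\|_{L^\infty(\Omega)}$ and an elementary interpolation inequality between $C^0(\Omega)$ and $C^\alpha(\Omega)$.

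First I would prove the $L^\infty$ bound via a maximum principle. For $t\in(0,T]$ Proposition~\ref{proposition:hille_yoside_prerequisits} gives $v(t)\in\operatorname{dom}(M)\subset H^1_D(\Omega)\cap C^\alpha(\Omega)$, so $\|v(t)\|_{C^0(\Omega)}=\|v(t)\|_{L^\infty(\Omega)}$ and it suffices to bound the latter. Setting $c=\|v_0\|_{L^\infty(\Omega)}\ge0$, the truncation $(v(t)-c)^+$ still belongs to $H^1_D(\Omega)$ (it vanishes on $\Gamma_D$, where $v(t)$ does and $c\ge0$); testing $v'(t)=-Mv(t)$ against it and using \eqref{local_equation_m_mathcal_m} gives
\[
  \tfrac12\tfrac{\mathrm d}{\mathrm dt}\|(v(t)-c)^+\|_{L^2(\Omega)}^2 = -\int_\Omega D\nabla(v(t)-c)^+\cdot\nabla(v(t)-c)^+ + k\,v(t)\,(v(t)-c)^+\,\mathrm dx \le 0,
\]
because $D$ is elliptic, $k>0$, and $v(t)(v(t)-c)^+\ge0$ (on $\{v(t)>c\}$ one has $v(t)>c\ge0$). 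Hence $t\mapsto\|(v(t)-c)^+\|_{L^2(\Omega)}^2$ is nonincreasing on $(0,T]$; since $v(t)\to v_0$ in $L^2(\Omega)$ as $t\to0^+$ while $(v_0-c)^+=0$ a.e., it vanishes identically, so $v(t)\le c$ a.e. Applying the same reasoning to $-v$, which solves the same Cauchy problem with datum $-v_0$ of equal $L^\infty$ norm, yields $v(t)\ge-c$ a.e. Together with the continuity of $t\mapsto v(t)$ into $C^\alpha(\Omega)\hookrightarrow C^0(\Omega)$ from Theorem~\ref{theorem:brezis_estimate}, this gives $v\in L^\infty(I,C^0(\Omega))$ with $\|v\|_{L^\infty(I,C^0(\Omega))}\le\|v_0\|_{L^\infty(\Omega)}$.

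Next, for $0<\beta<\alpha$ and $u\in C^\alpha(\Omega)$ I would use the interpolation inequality
\[
  \|u\|_{C^\beta(\Omega)}\le C(\alpha,\beta)\,\|u\|_{C^0(\Omega)}^{\,1-\beta/\alpha}\,\|u\|_{C^\alpha(\Omega)}^{\,\beta/\alpha},
\]
which follows by estimating the quotient $|u(x)-u(y)|/|x-y|^\beta$ by $[u]_{C^\alpha}|x-y|^{\alpha-\beta}$ when $|x-y|\le(\|u\|_{C^0(\Omega)}/[u]_{C^\alpha})^{1/\alpha}$ and by $2\|u\|_{C^0(\Omega)}|x-y|^{-\beta}$ otherwise. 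Applying this with $u=v(t)$, inserting Theorem~\ref{theorem:brezis_estimate}, the bound $\|v_0\|_{L^2(\Omega)}\le|\Omega|^{1/2}\|v_0\|_{L^\infty(\Omega)}$, and the previous step, I obtain for $t\in(0,T]$
\[
  \|v(t)\|_{C^\beta(\Omega)}\le C\big(\Omega,\nu,\|D\|_{L^\infty},\alpha,\beta\big)\Big(1+\tfrac1t\Big)^{\beta/\alpha}\|v_0\|_{L^\infty(\Omega)}.
\]
Choosing $\beta=\beta(q)\in(0,\alpha/q)$ — e.g.\ $\beta=\alpha/(q+1)\in(0,1)$ since $\alpha<1$ and $q>1$ — makes $q\beta/\alpha<1$, so $\int_0^T(1+1/t)^{q\beta/\alpha}\,\mathrm dt<\infty$; raising the last display to the $q$-th power and integrating over $I$ then yields \eqref{local_equation_estimate}, with the constant linear in $\|v_0\|_{L^\infty(\Omega)}$ and otherwise depending only on $\Omega,\nu,\|D\|_{L^\infty},I,\alpha,q$. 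Measurability of $t\mapsto v(t)$ into $C^\beta(\Omega)$ follows from its continuity on $(0,T]$, so $v\in L^q(I,C^\beta(\Omega))$.

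The step I expect to be the main obstacle is the rigorous justification of the differential inequality in the second paragraph, i.e.\ the chain rule $\frac{\mathrm d}{\mathrm dt}\|(v(t)-c)^+\|_{L^2(\Omega)}^2=2\langle(v(t)-c)^+,v'(t)\rangle_{L^2(\Omega)}$ for the $C^1$-in-time, $\operatorname{dom}(M)$-valued curve $v$ composed with the piecewise-smooth Lipschitz nonlinearity $s\mapsto((s-c)^+)^2$, together with the passage to the limit $t\to0^+$; this is classical (and can alternatively be deduced from the $L^\infty$-contractivity of the semigroup generated by $-M$), but it is the only part of the argument that is not a short computation. The interpolation inequality and the time-integrability bookkeeping are elementary.
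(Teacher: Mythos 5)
Your proof takes essentially the same route as the paper: apply Theorem~\ref{theorem:brezis_estimate} to get the $(1+1/t)$ blow-up of $\|v(t)\|_{C^\alpha(\Omega)}$, interpolate between $C^0(\Omega)$ and $C^\alpha(\Omega)$ to reduce the exponent on $(1+1/t)$ below $1/q$, and then integrate in time. One genuine addition on your side: the paper invokes $\|v(t)\|_{C^0(\Omega)}\le\|v_0\|_{L^\infty(\Omega)}$ without justification, whereas you supply a Stampacchia-type truncation argument (equivalently, $L^\infty$-contractivity of the semigroup generated by $-M$), correctly noting where $D$'s ellipticity and $k>0$ enter; this closes a small gap in the published proof, and your interpolation inequality with exponent $\beta/\alpha$ is the sharper standard form of the slightly ad hoc exponent-$1/p$ version used in the paper.
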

\begin{proof}
    Let $p>q$ be fixed. Choose $\beta > 0$ such that $\alpha/p > \beta$. Then we can estimate for every $u \in C^\alpha(\Omega)$
    \begin{equation*}
        \lVert u \Vert_{C^\beta(\Omega)} \leq C\cdot\lVert u \Vert_{C^0(\Omega)}\lVert u \Vert_{C^\alpha(\Omega)}^{1/p} + \lVert u  \Vert_{C^0(\Omega)}.
    \end{equation*}
    To see this compute
    \begin{align*}
        [u]_\beta &= \sup_{x\neq y}\frac{|u(x) - u(y)|^{1 - 1/p} |u(x) - u(y)|^{1/p} }{|x - y|^{\alpha/p + (\beta - \alpha/p)}}
        \\&=
        \sup_{x\neq y}|u(x) - u(y)|^{1 - 1/p}|x - y|^{\alpha/p - \beta} \cdot \left[ \frac{|u(x) - u(y)|}{|x - y|^\alpha} \right]^{1/p} 
        \\&\leq
        \left( 2\lVert u \Vert_{C^0(\Omega)} \right)^{1 - 1/p}\operatorname{diam}(\Omega)^{\alpha/p - \beta}[u]^{1/p}_\alpha. 
    \end{align*}
    Using the following estimate
    \begin{equation*}
        \norm{v(t)}_{C^0(\Omega)} \leq \norm{v_0}_{L^\infty(\Omega)}
    \end{equation*}
    and the above estimates of the $C^\beta$ norm and Theorem~\ref{theorem:brezis_estimate} we obtain
    \begin{align*}
        \lVert v(t) \Vert_{C^\beta(\Omega)} 
        &\leq 
        \left( 2\lVert v(t) \rVert_{C^0(\Omega)} \right)^{1-1/p}\operatorname{diam}(\Omega)^{\alpha/p - \beta} [v(t)]_\alpha^{1/p} + \lVert v(t) \Vert_{C^0(\Omega)}
        \\&\leq 
        \max\left( 1, 2\lVert v_0 \rVert_{L^\infty(\Omega)} \right) \cdot \max\left( 1, \operatorname{diam}(\Omega) \right) \cdot [v(t)]_\alpha^{1/p} + \lVert v_0 \rVert_{L^\infty(\Omega)}
        \\&\leq
        C\left( \lVert v_0 \rVert_{L^\infty(\Omega)}, \Omega \right) \cdot [v(t)]_\alpha^{1/p} + \lVert v_0 \rVert_{L^\infty(\Omega)}
        \\&\leq
        C\left( \lVert v_0 \rVert_{L^\infty(\Omega)}, \nu, \lVert D \rVert_{L^\infty}, \Omega \right) \cdot \left( 1 + \frac1t \right)^{\frac1p}
    \end{align*}
    Inferring $q/p < 1$ then shows the integrability of $\lVert v(t) \Vert_{C^\beta(\Omega)}^q$ and the asserted bound.
\end{proof}
\begin{remark}
    The constant in \eqref{local_equation_estimate} only depends on the length of the interval $I$, a lower bound for $\nu$ and an upper bound for $\norm{D}_{L^\infty(\Omega)}$, hence is uniform for suitable families of operators and time intervals.
\end{remark}

Finally we cite a known result from \cite{amann1995linear} to treat the case with the vanishing initial condition. 
\begin{theorem}\label{thm:amann}
    Assume we are in the situation of Proposition~\ref{proposition:hille_yoside_prerequisits}. Let $f\in L^p(I,L^2(\Omega))$ with $p\in[2,\infty)$ and denote by $u$ the solution to 
    \begin{align*}
    u'(t) + Mu(t) &= f \quad \text{on }(0,T]
    \\
    u(0) &= 0. \notag
\end{align*}
Then it holds $u\in W^{1,p}(I,L^2(\Omega))\cap L^p(I,\operatorname{dom}(M))$ with the estimate
\begin{equation*}
    \norm{u}_{W^{1,p}(I,L^2(\Omega))\cap L^p(I,\operatorname{dom}(M))} \leq C(\nu,\lVert D \rVert_{L^\infty(\Omega)}, p, I)\cdot \norm{f}_{L^p(I,L^2(\Omega))}
\end{equation*}
where $C(\nu,\lVert D \rVert_{L^\infty(\Omega)}, p, I)$ does depend on a lower bound for $\nu$, on an upper bound for $\lVert D \rVert_{L^\infty}$ and the upper bound $T$ of the time interval $I = [0,T]$.
\end{theorem}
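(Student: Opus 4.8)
The plan is to place $M$ into the classical Hilbert-space maximal $L^p$-regularity framework and then to track the constant. First I would record the structural features of $M$ supplied by Proposition~\ref{proposition:hille_yoside_prerequisits}. There $M$ is shown to be self-adjoint and maximal monotone on $H\coloneqq L^2(\Omega)$, and in fact it is strictly positive: for $u\in\operatorname{dom}(M)$,
\begin{equation*}
    (Mu,u)_H=\langle\mathcal{M}u,u\rangle_{H^1_D(\Omega)}=\int_\Omega D\nabla u\cdot\nabla u+k\,|u|^2\,\mathrm dx\geq k\,\lVert u\rVert_H^2 .
\end{equation*}
Hence $\sigma(M)\subset[k,\infty)$, so $0\in\rho(M)$ with $\lVert M^{-1}\rVert_{\mathcal L(H)}\le k^{-1}$, and $-M$ generates an exponentially stable analytic $C_0$-semigroup $(e^{-tM})_{t\ge0}$ on $H$; by the spectral theorem it satisfies the universal bounds $\lVert e^{-tM}\rVert_{\mathcal L(H)}\le e^{-kt}\le 1$ and $\sup_{t>0}\lVert tMe^{-tM}\rVert_{\mathcal L(H)}\le\sup_{s>0}se^{-s}=e^{-1}$.

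Second, I would invoke maximal $L^p$-regularity on Hilbert spaces — this is de Simon's theorem, of which the statement cited from \cite{amann1995linear} is an instance — in the following form: if $-A$ generates a bounded analytic semigroup on a Hilbert space and $p\in(1,\infty)$, then the mild-solution map
\begin{equation*}
    f\longmapsto\Big(t\mapsto\int_0^t e^{-(t-s)A}f(s)\,\mathrm ds\Big)
\end{equation*}
maps $L^p(\mathbb R_+,H)$ boundedly into $W^{1,p}(\mathbb R_+,H)\cap L^p(\mathbb R_+,\operatorname{dom}(A))$, and
\begin{equation*}
    \lVert u'\rVert_{L^p(\mathbb R_+,H)}+\lVert Au\rVert_{L^p(\mathbb R_+,H)}\le C_p\,\lVert f\rVert_{L^p(\mathbb R_+,H)},
\end{equation*}
with $C_p$ controlled by $p$ and the two analyticity constants above — hence, here, by $p$ alone. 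Concretely I would extend $f$ by zero to $\tilde f\in L^p(\mathbb R_+,H)$, so that $\lVert\tilde f\rVert_{L^p(\mathbb R_+,H)}=\lVert f\rVert_{L^p(I,L^2(\Omega))}$, apply this with $A=M$, and let $u$ be the restriction of the resulting function to $I$. Then $u$ satisfies $u'+Mu=f$ a.e.\ on $(0,T]$ with $u(0)=0$, and it is the only element of $W^{1,p}(I,L^2(\Omega))\cap L^p(I,\operatorname{dom}(M))$ with this property, since testing the homogeneous equation with $u$ and using monotonicity forces $u\equiv0$; thus $u$ is the solution named in the statement. (If one prefers not to pass through the half-line, the same estimate holds verbatim on the finite interval $(0,T)$ for any bounded analytic semigroup, at the cost of an additional dependence of the constant on $T$.)

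Third, I would assemble the asserted norm estimate. From $u(0)=0$ and Hölder's inequality, $\lVert u(t)\rVert_{L^2(\Omega)}\le\int_0^t\lVert u'(s)\rVert_{L^2(\Omega)}\,\mathrm ds\le t^{1-1/p}\lVert u'\rVert_{L^p(I,L^2(\Omega))}$, whence $\lVert u\rVert_{L^p(I,L^2(\Omega))}\le T\lVert u'\rVert_{L^p(I,L^2(\Omega))}$; also $\lVert u\rVert_{L^p(I,\operatorname{dom}(M))}\le\lVert u\rVert_{L^p(I,L^2(\Omega))}+\lVert Mu\rVert_{L^p(I,L^2(\Omega))}$ (and $\lVert u\rVert_{L^p(I,L^2(\Omega))}\le k^{-1}\lVert Mu\rVert_{L^p(I,L^2(\Omega))}$ could be used instead). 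Combining these with the maximal regularity estimate yields
\begin{equation*}
    \lVert u\rVert_{W^{1,p}(I,L^2(\Omega))\cap L^p(I,\operatorname{dom}(M))}\le C(p,T)\,\lVert f\rVert_{L^p(I,L^2(\Omega))},
\end{equation*}
with $C$ depending only on $p$ and an upper bound for $T$; in particular it does not blow up as $T\downarrow0$, and \emph{a fortiori} the (weaker) dependence on a lower bound for $\nu$ and an upper bound for $\lVert D\rVert_{L^\infty}$ claimed in the statement holds as well.

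The point that needs the most care is organizational rather than analytical: one must check that $M$ genuinely meets the hypotheses of de Simon's theorem with constants governed by the admissible parameters — which Proposition~\ref{proposition:hille_yoside_prerequisits} together with the spectral theorem delivers — and must reconcile the maximal-regularity mild solution with the pointwise/variational solution concept used elsewhere in this section while keeping track of the interval-length dependence of the constant. Beyond that there is no real obstacle, since on a Hilbert space de Simon's theorem applies unconditionally to any generator of a bounded analytic semigroup; a self-contained route would diagonalise $M$ by the spectral theorem and reduce the operator-valued singular integral defining $u'$ to scalar Fourier-multiplier estimates on $L^p(\mathbb R;H)$, but this only re-proves de Simon and is not needed.
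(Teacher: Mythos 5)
Your proof is correct and reaches the same conclusion, but by a genuinely different route. The paper invokes Theorem~4.10.8 of Amann (the Dore--Venni/BIP framework), checking that $L^2(\Omega)$ is UMD and that the self-adjoint coercive operator $M$ belongs to $\mathcal{BIP}(L^2(\Omega);1,0)$, and cites the resulting maximal regularity estimate directly; constant dependence on $\nu$ and $\lVert D\rVert_{L^\infty}$ is then read off from that abstract theorem. You instead go through de Simon's theorem for generators of bounded analytic semigroups on Hilbert spaces, pre-establishing via the spectral theorem the universal semigroup bounds $\lVert e^{-tM}\rVert\le1$ and $\sup_{t>0}\lVert tMe^{-tM}\rVert\le e^{-1}$ for any non-negative self-adjoint operator, and then adding the $L^p(I,L^2)$ contribution to the graph norm by integrating $u(0)=0$ forward. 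This is more self-contained and, importantly, yields a slightly sharper statement than the paper asserts: the analyticity constants are universal for self-adjoint non-negative operators, so the maximal regularity constant depends only on $p$ and $T$ and not actually on $\nu$ or $\lVert D\rVert_{L^\infty}$ (which you correctly note makes the paper's claimed dependence hold \emph{a fortiori}). Your reconciliation of the mild solution with the variational solution via monotonicity and uniqueness is the right thing to spell out and is only implicit in the paper. Both approaches are valid; the paper's is a clean citation within Amann's general Banach-space framework, while yours is a Hilbert-space-specific argument that tracks constants more transparently.

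One minor caveat worth flagging: your lower bound $\sigma(M)\subset[k,\infty)$ and the resulting exponential stability use the reaction constant $k>0$ from Proposition~\ref{proposition:hille_yoside_prerequisits}; if one wanted the estimate to be robust as $k\downarrow0$, one should use the $T$-dependent bound $\lVert u\rVert_{L^p(I,L^2)}\le T\lVert u'\rVert_{L^p(I,L^2)}$ rather than $k^{-1}\lVert Mu\rVert_{L^p(I,L^2)}$, which is precisely the option you already identify as primary. No substantive gap.
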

\begin{proof}
    We apply Theorem 4.10.8 in \cite{amann1995linear}, using $E_0 = L^2(\Omega)$, $E_1 = \operatorname{dom}(M)$. The requirement of $E_0$ being an UMD space holds as it is a Hilbert space, the other requirements can be shown using the fact that $M$ is self-adjoint and coercive, i.e., a member of $\mathcal{BIP}(L^2(\Omega);1,0)$ in the terminology of \cite{amann1995linear}. As we consider a problem with homogeneous initial conditions we don't need to concern ourselves with the trace space for the initial conditions.
\end{proof}
\begin{proof}[Completion of the proof of Theorem~\ref{thm:main_repeat}]
    Employing Theorem~\ref{thm:for_v_2} for $v_2$ and Theorem~\ref{thm:amann} for $v_1$ we conclude that
    \begin{equation*}
        \norm{v}_{L^p(I,C^\alpha(\Omega))} 
        \leq
        \norm{v_1}_{L^p(I,C^\alpha(\Omega))} + \norm{v_2}_{L^p(I,C^\alpha(\Omega))}
        \leq 
        C\left(\nu,\lVert D \rVert_{L^\infty}, p, I, \beta \right)\cdot\left( \lVert f\rVert_{L^p(I,L^2(\Omega))} + \lVert v_0 \rVert_{L^\infty(\Omega)} \right).
    \end{equation*}
\end{proof}

\bibliographystyle{apalike}
\bibliography{references}

\end{document}